\theoremstyle{plain}
\newtheorem{theorem}{Theorem}[section]
\newtheorem{definition}[theorem]{Definition}
\newtheorem{proposition}[theorem]{Proposition}
\newtheorem{corollary}[theorem]{Corollary}
\newtheorem{lemma}[theorem]{Lemma}
\theoremstyle{remark}
\numberwithin{equation}{section}
\numberwithin{figure}{section}
\newcommand{\eps}{\varepsilon}
\newcommand{\Z}{\mathbb{Z}}
\newcommand{\R}{\mathbb{R}}
\newcommand{\C}{\mathbb{C}}
\newcommand{\Ad}{\mathrm{Ad}}
\newcommand{\Haarof}[1]{m_{#1}}
\newcommand{\supp}[1]{\mathrm{supp}(#1)}
\begin{document}
	\title[Effective Density of Random Walks on Homogeneous Spaces]{Effective Density of Non-Degenerate Random Walks on Homogeneous Spaces}
	\author[wk]{Wooyeon Kim}
	\author[ck]{Constantin Kogler}
	\thanks{The first author was supported by the Korea Foundation for Advanced Studies (KFAS). The second author gratefully acknowledges support from the European Research Council (ERC) grant No. 803711 as well as from the CCIMI at Cambridge.}
	
	\email{wooyeon.kim@math.ethz.ch, kogler@maths.ox.ac.uk}
	
	\address{Wooyeom Kim, Department of Mathematics, ETH Zürich, Rämistrasse 101, 8092 Zürich, Switzerland}
	
	\address{Constantin Kogler, Mathematical Institute, University of Oxford, Radcliffe Observatory Quarter, Woodstock Road, Oxford OX2 6GG, United Kingdom}
	
	\begin{abstract}
		We prove effective density of random walks on homogeneous spaces, assuming that the underlying measure is supported on matrices generating a dense subgroup and having algebraic entries. The main novelty is an argument passing from high dimension to effective equidistribution in the setting of random walks on homogeneous spaces, exploiting the spectral gap of the associated convolution operator.  
	\end{abstract}
	\maketitle
	
	\tableofcontents
	
	\section{Introduction}
	
	The goal of this paper is to establish effective density of certain countably supported random walks on homogeneous spaces of general simple Lie groups. 
	
	Let $G$ be a connected simple Lie group, $\Lambda < G$ a lattice, and $X = G/\Lambda$. We consider the height function  $\mathrm{ht}: X \to \mathbb{R}_{\geq 1}$ on $X$ as defined and discussed in Section~\ref{SectionQuantitativeNonDivergence}. For the purposes of the introduction, we mention that the height of a point $x_0 \in X$ measures how deep $x_0$ is in any of the cusps of $X$. Moreover, if $X$ is compact then  $\mathrm{ht} \equiv 1$. For any $\mathbf{h} > 0$ we denote by $X(\mathbf{h})$ the set of $x \in X$ with $\mathrm{ht}(x) \leq \mathbf{h}$.
	
	For a subset $S \subset G$ and $x_0 \in X$, we define $$ \mathrm{diam}_{r}(X,S,x_0) = \min\{ \ell \geq 0 \,:\, S^{\ell}x_{0} \text{ is } r\text{-dense in }  X(r^{-1}) \},$$ where we say that the set $S^{\ell}x_0$ is $r$-dense in $ X(r^{-1})$ if for every $y \in  X(r^{-1})$ there is $x \in S^{\ell}x_0$ such that $d_X(x,y) < r$, for $d_X$ the metric on $X$ defined in Section~\ref{SectionOutline}.

	Our first result is an estimate of $\mathrm{diam}_{r}(X,S,x_0)$ under the assumption that $S \subset G$ is a symmetric subset (i.e. $S = S^{-1}$) supported on matrices with algebraic entries and generating a dense subgroup. To formulate our result, denote by $\mathfrak{g}$ the Lie algebra of $G$ and by $\mathrm{Ad} : G \to \mathrm{GL}(\mathfrak{g})$ the adjoint representation. For the asymptotic notation used, we refer to Section~\ref{Notation}.
	
	\begin{theorem}\label{EffectiveDiameter}
		Let $G$ be a connected simple Lie group with finite center, $\Lambda < G$ a lattice and $X = G/\Lambda$. Let $S \subset G$ be a symmetric set generating a dense subgroup of $G$. Assume further that there is a basis of $\mathfrak{g}$ such that $\mathrm{Ad}(S)$ consists of matrices with algebraic entries with respect to the chosen basis of $\mathfrak{g}$. Then for $x_0 \in X$ and $r > 0$,
		\begin{equation}\label{EffectiveDiameterEstimate}
			\mathrm{diam}_{r}(X,S,x_0) \ll_{\Lambda,S} \log r^{-1} + \log \mathrm{ht}(x_0),
		\end{equation}
		where the implied constant depends on $\Lambda$ and $S$. 
	\end{theorem}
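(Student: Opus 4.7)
The plan is to combine three ingredients: a spectral gap for the convolution operator associated to the uniform probability measure $\mu$ on $S$; the quantitative non-divergence estimates of Section~\ref{SectionQuantitativeNonDivergence}; and a Sobolev-type argument converting $L^2$ mixing into pointwise density at scale $r$.

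First I would invoke a Bourgain-Gamburd / Benoist-de Saxc\'e type spectral gap: since $\langle S \rangle$ is dense in $G$ and $\mathrm{Ad}(S)$ has algebraic entries, the convolution operator $T_\mu f(x) := \frac{1}{|S|} \sum_{s \in S} f(sx)$ on the subspace $L^2_0(X) \subset L^2(X)$ of mean-zero functions has operator norm $\alpha < 1$, for some $\alpha = \alpha(S)$. I would then use the quantitative non-divergence input from Section~\ref{SectionQuantitativeNonDivergence}: for $\ell \geq C_1(\log r^{-1} + \log \mathrm{ht}(x_0))$, the measure $\mu^{*\ell} * \delta_{x_0}$ assigns at least $1 - O(r^c)$ of its mass to $X(r^{-1})$ for some $c > 0$, so the walk concentrates in the bulk.

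Next, for each target $y \in X(r^{-1})$, I would test $T_\mu^\ell$ against a nonnegative smooth bump $\psi_{y,r}$ supported on $B(y, r/2) \subset X(r^{-1})$, normalized so that $\|\psi_{y,r}\|_\infty \asymp 1$ and $\int \psi_{y,r} \, dm_X \asymp r^{\dim G}$. Since $T_\mu$ commutes with right-invariant differential operators on $X$, iterating the spectral gap on $\psi_{y,r}$ and its derivatives, together with the Sobolev embedding $H^k \hookrightarrow C^0$ for $k > \dim G/2$, would yield
$$\bigl| (T_\mu^\ell \psi_{y,r})(x_0) - {\textstyle \int} \psi_{y,r} \, dm_X \bigr| \ll C(x_0)\, \alpha^\ell\, r^{\dim G/2 - k},$$
with $C(x_0)$ at most polynomial in $\mathrm{ht}(x_0)$. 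Taking $\ell$ a sufficiently large multiple of $\log r^{-1} + \log \mathrm{ht}(x_0)$ makes this error dominated by $r^{\dim G}$, so that $(T_\mu^\ell \psi_{y,r})(x_0) > 0$. Since this evaluation equals $\sum_{g \in S^\ell} \mu^{*\ell}(g)\,\psi_{y,r}(gx_0)$ (a nonnegative sum), some $g \in S^\ell$ must satisfy $gx_0 \in \mathrm{supp}(\psi_{y,r}) \subset B(y, r)$, giving $r$-density of $S^\ell x_0$ in $X(r^{-1})$.

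The main obstacle I expect is the interplay between Sobolev estimates and the cusps of $X$: the implied constant in $H^k \hookrightarrow C^0$ at a point $x$ degrades with $\mathrm{ht}(x)$, so one has to either truncate the walk to bounded height using non-divergence before invoking the spectral gap, or propagate a polynomial-in-$\mathrm{ht}(x_0)$ factor through the Sobolev step; this is precisely the mechanism producing the $\log \mathrm{ht}(x_0)$ term in \eqref{EffectiveDiameterEstimate} and the restriction of the target to $X(r^{-1})$. A second, more substantive obstacle is establishing the spectral gap $\alpha < 1$ in the required generality, and more generally obtaining the sharp logarithmic dependence without losing factors in $\dim G$; this is presumably where the ``high dimension to effective equidistribution'' novelty advertised in the abstract enters, refining the crude spectral-gap-plus-Sobolev scheme by tracking how the random walk measure acquires close-to-full dimension at scale $r$ and coupling this flattening with the spectral gap to produce the effective equidistribution rate claimed in \eqref{EffectiveDiameterEstimate}.
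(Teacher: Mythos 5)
Your identification of the key ingredients (spectral gap for $\pi_X(\mu)$, quantitative non-divergence to handle cusps, testing against a bump at scale $r$) is on the right track, and your closing remarks correctly sense that the ``high dimension to effective equidistribution'' idea is where the real work happens. However, the central step of your argument is broken. You claim that $T_\mu = \pi_X(\mu)$ commutes with right-invariant differential operators on $X = G/\Lambda$ and therefore that the spectral gap can be iterated in $H^k$. This is false: writing $R_Y$ for the right-invariant vector field generated by $Y\in\mathfrak{g}$ (these are the ones that descend to $G/\Lambda$), one has $R_Y\,\pi_X(g) = \pi_X(g)\,R_{\mathrm{Ad}(g^{-1})Y}$, so differentiation is twisted by the adjoint. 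Consequently $\|T_\mu^\ell\psi\|_{H^k}$ does not decay like $\alpha^\ell\|\psi\|_{H^k}$ --- generically it \emph{grows} like $e^{O(k R(\mu)\ell)}\|\psi\|_{H^k}$, and likewise $\mathrm{Lip}(\pi_X(\mu)^\ell f)\ll e^{O(R(\mu)\ell)}\mathrm{Lip}(f)$. Your bound $|T_\mu^\ell\psi_{y,r}(x_0)-\int\psi_{y,r}|\ll\alpha^\ell r^{\dim G/2 - k}$ therefore does not follow, and the Sobolev step collapses.

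The paper replaces the Sobolev iteration with a structurally different argument, and this is exactly the advertised novelty. One brings in an auxiliary finitely supported $(c_1,c_2,\varepsilon)$-Diophantine measure $\mu_D$ with $\mathrm{supp}(\mu_D)\subset\langle S\rangle\cap B_\varepsilon(e)$ (Theorem~\ref{BIG17Theorem3.1}), and uses the $L^2$-flattening of \cite{BoutonnetIoanaSalehiGolsefidy2017} (Proposition~\ref{SuperFlatteningLemma}) to show that $\nu_m=\mu_D^{*m}*\delta_{x_0}$ acquires nearly full dimension $\dim G-\gamma$ on $X$ at scale $\delta$ after $m\asymp\log\delta^{-1}$ steps (Proposition~\ref{HighDimension}). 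One then writes $\int f\,d(\mu^{*n}*\nu_m)=\langle\pi_X(\mu)^n f, h_{m,\delta}\rangle+O(\delta\,e^{O(R(\mu)n)}\mathrm{Lip}(f)+\cdots)$ with $h_{m,\delta}(x)=\nu_m(B_\delta^X(x))/m_G(B_\delta)$, and applies the spectral gap \emph{once} to the inner product; high dimension controls $\|h_{m,\delta}\|_2\ll\delta^{-2\gamma}$. The $\delta$ is then chosen exponentially small in $n$ so that the single factor $e^{-\mathrm{gap}(\mu)n/2}\delta^{-2\gamma}$ still beats the unavoidable $\delta\,e^{O(R(\mu)n)}$ loss in the smoothing error, provided $\gamma$ is taken small relative to $\mathrm{gap}(\mu)/R(\mu)$ (see \eqref{ParameterGamma}). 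This balances the exponential Lipschitz blow-up you would have to confront in the Sobolev scheme, rather than pretending it is absent. Theorem~\ref{EffectiveDiameter} is then deduced from the resulting Theorem~\ref{MainTheorem} by testing against bump functions much as you propose, but with the crucial extra observation that $\mathrm{supp}(\mu_D^{*\beta n})\subset S^{k_0\beta n}$, so that the equidistributed measure is still supported on powers of $S$.
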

	
	Concrete examples of subsets $S$ in $G$ satisfying the assumptions of Theorem~\ref{EffectiveDiameter} can be constructed by considering an Iwasawa decomposition of $G = KAN$ or by choosing finitely many one parameter unipotent subgroups that generate $G$.
	
	We note that Theorem~\ref{EffectiveDiameter} is new even in the case $G = \mathrm{SL}_2(\R)$ and is not known to the authors for any Zariski dense countable set $S$. On the other hand, for $X = \mathrm{SL}_d(\R)/\mathrm{SL}_d(\Z)$ and certain $S$ generating a solvable subgroup of $\mathrm{SL}_d(\R)$ and arising from a rational iterated function system, Theorem~\ref{EffectiveDiameter} follows for $x_0 = e\Gamma \in X$ from the recent effective equidistribution result of Khalil-Luethi \cite{KhalilLuethi2022}. Moreover, if $X$ is compact, then Theorem~\ref{EffectiveDiameter} follows from the quantitative density result on $G$ as established in section 9 of \cite{DeroinHurtado2020}.
	
	The implied constant of \eqref{EffectiveDiameterEstimate} is complicated to compute and depends on the spectral gap (defined in \eqref{gapdef}), Diophantine properties and further growth parameters of the set $S$. Nonetheless, the asymptotic behaviour being logarithmic in $r^{-1}$ is optimal. 
	
	For compact groups, Theorem~\ref{EffectiveDiameter} follows from the spectral gap result established by the Bourgain-Gamburd method \cite{BourgainGamburd2008Invent} as developed for general compact simple Lie groups by Benoist-de Saxcé \cite{BenoistDeSaxce2016}. With current techniques, it is necessary to assume that the entries of the matrices in $S$ are algebraic. Indeed, without this assumption the corresponding result is not even known for compact groups. For arbitrary sets $S$ in compact groups that generate a dense subgroup, a poly-logarithmic rate for the $r$-diameter of $S$ follows from the Solovay-Kitaev algorithm. As was shown in \cite{SolovayKitaevNonCompact} and \cite{Kuperberg2015}, the Solovay-Kitaev algorithm extends to non-compact perfect Lie groups groups, which leads to a poly-logarithmic estimate of $\mathrm{diam}_{r}(X,S,x_0)$ for arbitrary finite subsets $S \subset \mathrm{SL}_d(\C)$ that generate a dense subgroup. The exponent in the poly-logarithmic rate was improved in   \cite{BoulandGiurgicaTiron2021} and \cite{Kuperberg2023}. For all these examples, it is believed that a logarithmic diameter bound holds. 
	
	Recall that a Zariski dense subgroup of $G$ is either dense or discrete. In the case when $\Gamma = \langle S \rangle$ is discrete, further difficulties arise as the orbit of $\Gamma x_0$ may be finite. If $\Gamma$ is moreover a lattice in $G$, the density of the $\Gamma x_0$ orbit in $G/\Lambda$ can be understood by studying the $\Delta(G)(x_0,e\Gamma)$-orbit on the homogeneous space $G/\Lambda \times G/\Gamma$, where $\Delta: G\to G\times G$ is the diagonal embedding. For the latter case, if one assumes $G = \mathrm{SL}_2(\R)$ and that $\Lambda$ and $\Gamma$ are arithmetic lattices, it appears that one may apply the recent results of Lindenstrauss-Mohammadi \cite{LindenstraussMohammadi2022} to deduce effective density of the $\Gamma$ orbit at $x_0$. 
	
	Let $\mu$ be a probability measure on $G$  whose support is a finite symmetric subset generating a dense subgroup. Then the recent result of Bénard \cite{Benard2021}, using the landmark measure classification theorem of Benoist-Quint \cite{BenoistQuint2013}, implies that for all $x_0 \in X$,
	\begin{equation}\label{ConvolutionEquidistribution}
		\mu^{*n} * \delta_{x_0} \to \Haarof{X}
	\end{equation}
	as $n$ tends to infinity, for $\Haarof{X}$ the Haar probability measure on $X$. Under the additional assumption that $\mathrm{Ad}(\mathrm{supp}(\mu))$ consists of matrices with algebraic entries with respect to a basis of $\mathfrak{g}$, Theorem~\ref{EffectiveDiameter} gives an effective estimate of how dense the support of $\mu^{*n}*\delta_{x_0}$ is in $X$. 
	
	Let $Z_1, Z_2, \ldots $ be independent $\mu$-distributed random variables on $G$. For $x_0 \in X$ denote by $$Y_{n,x_0} = Z_n \cdots Z_1 x_0.$$ The sequence $(Z_1, Z_2, \ldots)$ is distributed according to the probability measure $\mu^{\otimes \mathbb{N}}$, yielding a probability distribution of the random sequence $(Y_{n,x_0})_{n \geq 1}$. A further result by \cite{BenoistQuint2013} states that $\mu^{\otimes \mathbb{N}}$-almost surely the orbit $(Y_{n,x_0})_{n \geq 1}$ equidistributes, i.e. for all $f \in C_c(X)$, 
	\begin{equation}\label{OrbitEquidistribution}
		\lim_{N \to \infty}\frac{1}{N}\sum_{n = 0}^{N-1} f(Y_{n,x_0}) = \int f \, d\Haarof{X}.  
	\end{equation} Our second result is an effective density theorem for the orbit $(Y_{n,x_0})_{n \geq 1}$.
	
	\begin{theorem}\label{EffectiveDensity}
		Let $G$, $\Lambda$, $X$ and $S$ be as in Theorem~\ref{EffectiveDiameter} and let $\mu$ be a probability measure on $G$ with support $S$. Then for $A > 0$ large enough depending on $\Lambda$ and $\mu$ the following holds: For any $x_0 \in X$,  
		\begin{equation}\label{ProbEstimateEffectiveDensity}
			\mathbb{P}[(Y_{1,x_0}, \ldots , Y_{\lceil r^{-A}\rceil,x_0}) \text{ is not } r\text{-dense in } X(r^{-1})] \leq \mathrm{ht}(x_0)\cdot r^{\alpha \cdot A},
		\end{equation} for $r$ small enough in terms of $\Lambda, \mu$ and $A$ and for $\alpha = \alpha(\Lambda,\mu) > 0$ a constant depending on $\Lambda$  and $\mu$. Moreover, $\mu^{\otimes \mathbb{N}}$-almost surely, the collection of points $(Y_{1,x_0}, \ldots , Y_{\lceil r^{-A}\rceil,x_0})$ is $r$-dense in $X(r^{-1})$ for $r > 0$ small enough depending on the sequence. 
	\end{theorem}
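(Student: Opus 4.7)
The strategy is to reduce~\eqref{ProbEstimateEffectiveDensity} to a single-point non-visit estimate, union bound over an $r$-net of $X(r^{-1})$, and conclude the almost sure statement by Borel--Cantelli. Fix $y \in X(r^{-1})$ and let $\phi$ be a smooth bump with $\mathbf{1}_{B(y, r/2)} \leq \phi \leq \mathbf{1}_{B(y, r)}$, $\int \phi \, dm_X \asymp r^{\dim G}$, and Sobolev norm $\lesssim r^{-C}$. The goal of the single-point estimate is
\begin{equation*}
    \mathbb{P}\bigl[\phi(Y_{n, x_0}) = 0 \text{ for all } 1 \leq n \leq \lceil r^{-A}\rceil \bigr] \ll \mathrm{ht}(x_0) \cdot r^{\alpha A + D},
\end{equation*}
for $D = D(\Lambda)$ an exponent controlling the $r$-covering number of $X(r^{-1})$; a union bound over the net then yields~\eqref{ProbEstimateEffectiveDensity}.

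Two ingredients drive the single-point bound. First, quantitative non-divergence places $Y_{n_0, x_0}$ in the bulk $X(r^{-\kappa})$ outside an event of probability $\ll \mathrm{ht}(x_0) \cdot r^{\alpha A + D}$ after a burn-in of $n_0 \asymp \log \mathrm{ht}(x_0) + \log r^{-1}$ steps; iterating this bound also controls later cusp excursions. Second, the spectral gap of the convolution operator $T_\mu$ on $L^2_0(X, m_X)$, combined with a Sobolev embedding, delivers the pointwise mixing estimate
\begin{equation*}
    \bigl| T_\mu^\ell \phi(z) - \textstyle\int \phi \, dm_X \bigr| \ll (1-\eta)^\ell \cdot r^{-C}
\end{equation*}
uniformly for $z \in X(r^{-\kappa})$, which is $\ll r^{\dim G}$ once $\ell$ is chosen a sufficiently large multiple of $\log r^{-1}$. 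In particular, $\mathbb{E}[\phi(Y_{n+\ell, x_0}) \mid \mathcal{F}_n] \gtrsim r^{\dim G}$ on the event $\{Y_{n, x_0} \in X(r^{-\kappa})\}$.

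Partition $[n_0, \lceil r^{-A}\rceil]$ into $K \asymp r^{-A}/\log r^{-1}$ chunks of length $\ell$ and set $N = \sum_{k=1}^K \phi(Y_{n_0 + k\ell, x_0})$. The mixing bound yields $\mathbb{E}[N] \gtrsim K r^{\dim G}$, while the covariances $\mathrm{Cov}(\phi(Y_{n_0 + k\ell, x_0}), \phi(Y_{n_0 + k'\ell, x_0}))$ are again controlled by the spectral gap input. Chebyshev's inequality then gives $\mathbb{P}[N = 0] \ll 1/(K r^{\dim G}) \ll r^{A - \dim G}\log r^{-1}$, completing the single-point estimate for $A$ large relative to $\dim G$. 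The almost sure statement follows by applying~\eqref{ProbEstimateEffectiveDensity} along $r_m = 2^{-m}$ with the smaller exponent $A/2$: summability of $\mathrm{ht}(x_0) r_m^{\alpha A/2}$ and Borel--Cantelli yield dyadic-scale density almost surely for $m$ large, and for $r \in [r_{m+1}, r_m]$ the first $\lceil r^{-A}\rceil$ points include the first $r_{m+1}^{-A/2}$ points, which are $r_{m+1}$-dense (hence $r$-dense) in $X(r_{m+1}^{-1}) \supseteq X(r^{-1})$.

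The main obstacle is translating the abstract $L^2$ spectral gap into the pointwise kernel estimate driving the mixing bound: one needs a Sobolev inequality with explicit, tame dependence on the height at the evaluation point, since this dependence bottlenecks the burn-in step and determines the final exponent $\alpha$. A secondary technical task is verifying decorrelation of the chunk indicators $\phi(Y_{n_0+k\ell, x_0})$ conditionally on remaining in the bulk, which is handled by coupling the non-divergence tail with the spectral mixing estimate in each chunk.
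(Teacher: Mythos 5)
Your framework — union bound over an $r$-net, a single-point hitting estimate over disjoint time chunks, and Borel--Cantelli at dyadic scales for the almost-sure statement — matches the paper's scaffold. But the engine you put inside that scaffold does not work, and the gap is exactly the one the paper is designed to avoid.

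The problematic step is the claimed pointwise mixing estimate
\begin{equation*}
\bigl| T_\mu^\ell \phi(z) - \textstyle\int \phi \, dm_X \bigr| \ll (1-\eta)^\ell \, r^{-C} \quad \text{for } z \in X(r^{-\kappa}),
\end{equation*}
which you propose to derive from the $L^2$ spectral gap ``combined with a Sobolev embedding.'' The spectral gap controls $\|T_\mu^\ell\phi - \int\phi\|_{L^2}$, not any higher Sobolev norm: $T_\mu^\ell\phi$ is an average of translates $\phi(g\,\cdot)$ over $g\in\operatorname{supp}(\mu^{*\ell})$, so $\|T_\mu^\ell\phi\|_{H^s}$ grows like $e^{O(s R(\mu))\ell}\|\phi\|_{H^s}$ rather than decaying. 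A Sobolev embedding $\|\cdot\|_\infty\lesssim\|\cdot\|_{H^s}$ with $s>\dim G/2$ therefore does not transfer the $L^2$ decay into an $L^\infty$ bound; at best you could interpolate, which only works when $\mathrm{gap}(\mu)$ safely dominates $R(\mu)$, a constraint you never verify. Worse, a uniform pointwise bound of this shape is exactly effective equidistribution of $\mu^{*\ell}*\delta_z$ with exponential rate, which the paper explicitly states as open with current techniques (the introduction emphasizes that on noncompact $X$ ``the spectral gap of $\mu$ appears not to be useful in controlling the arising error terms,'' and Theorem~\ref{MainTheorem} only handles $\mu^{*n}*\mu_D^{*\beta n}*\delta_{x_0}$, never $\mu^{*n}*\delta_{x_0}$). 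So your single-point estimate is assuming the hard missing theorem as an ingredient. The second-moment/Chebyshev computation inherits the same problem, since the covariance control you invoke again needs pointwise mixing.

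The intended route is much lighter precisely because it does \emph{not} try to get mixing at a Dirac mass. Theorem~\ref{EffectiveDensity} is stated ``with $G,\Lambda,X,S$ as in Theorem~\ref{EffectiveDiameter},'' and the paper derives it \emph{from} Theorem~\ref{EffectiveDiameter}: since $S^N x_0$ is $r$-dense in $X(r^{-1})$ for $N=O(\log r^{-1}+\log\operatorname{ht}(x_0))$, for every $y$ in the bulk and every target ball $B_r^X(x_i)$ there is a length-$N$ word $h(y,N,i)\in S^N$ sending $y$ into the ball, and this single word already carries mass $\mu^{*N}(h(y,N,i))\geq r^B$. One then runs a one-sided iteration (Lemma~\ref{PkiInductionEstimate}): at each length-$N$ chunk, conditionally on not yet having visited $B_r^X(x_i)$ and on being in the bulk (controlled by the quantitative non-divergence Lemma~\ref{QnD}), the failure probability contracts by a factor $(1-r^B/2)$. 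Iterating $M_r\approx r^{-DA}$ times beats $r^{A+\dim G}$, and the union bound over the $\asymp r^{-\dim G}$ centers gives~\eqref{ProbEstimateEffectiveDensity}. No mixing rate, Sobolev inequality, or covariance estimate is needed. Your Borel--Cantelli step for the almost-sure conclusion is essentially correct and parallels the paper's. To repair your write-up, replace the pointwise mixing estimate by the deterministic covering statement of Theorem~\ref{EffectiveDiameter} plus the trivial mass lower bound for a fixed short word, and replace the Chebyshev/second-moment step by the multiplicative one-step contraction.
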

	
	An $r$-dense subset of $X(r^{-1})$ needs to contain at least $O(r^{-\dim G})$ many elements.  Therefore by the pigeonhole principle the constant $A > 0$ from \eqref{ProbEstimateEffectiveDensity} must satisfy $A \geq \dim G$. We furthermore point out that Theorem~\ref{EffectiveDensity} is a consequence of Theorem~\ref{EffectiveDiameter}.
	
	It is a well-known open problem to prove error rates for \eqref{ConvolutionEquidistribution} and \eqref{OrbitEquidistribution}. While our methods only suffice to establish the above discussed effective density theorems, we show, as stated in Theorem \ref{MainTheorem}, effective equidistribution on $X$ with exponential error terms for sequences of measures of the form $\mu^{*n} * \mu_D^{*\beta\cdot n} * \delta_{x_0}$, where $\mu_D$ is a measure that satisfies sufficiently strong Diophantine properties and is supported close enough to the identity in terms of $\mu$. This is achieved by using the $L^2$-flattening results of \cite{BoutonnetIoanaSalehiGolsefidy2017} together with a novel argument passing from high dimension to effective equidistribution in the setting of random walks on homogeneous spaces. 
	
	The difficulty in the latter argument is that a Fourier inversion formula on $X$ is either not available or involves complicated terms and, in contrast to compact groups, the spectral gap of $\mu$ appears not to be useful in controlling the arising error terms. On the other hand, our method is related to the ideas by Venkatesh \cite[Section 3.1]{Venkatesh2010}, which were recently used by Mohammadi-Lindenstrauss \cite{LindenstraussMohammadi2022} to pass from high dimension to effective equidistribution for unipotent actions. In our setting, for a given measure $\mu$ on $G$ and a sequence of measures $\nu_n$ on $X$,  we show (Corollary~\ref{HighDimensiontoQuantitativeEquidistribution}), using the spectral gap of $\mu$, that the sequence $\mu^{*n} * \nu_n$ equidistributes effectively under the assumption that $\nu_n$ has dimension (see \eqref{HighDimensionProperty}) at least $\dim G - \gamma$ for $\gamma = \gamma(\mu) > 0$ a constant depending on $\mu$. This method is the main contribution of this paper and allows us to deduce Theorem~\ref{MainTheorem}, upon which Theorem 1.1 relies. 
	
	The current techniques prevent us from proving effective equidistribution of $\mu^{*n}_D * \delta_{x_0}$ for a measure $\mu_D$ satisfying sufficiently strong Diophantine properties, as we are presently not able to control $\gamma(\mu_D)$.
	
	In \cite{Kogler2022}, the second author introduced the notion of a $(c_1,c_2,\eps)$-Diophantine measure in order to conveniently capture the flattening results of \cite{BoutonnetIoanaSalehiGolsefidy2017}. We utilize the same definition in this paper and refer to \cite{Kogler2022} for a discussion. 
	
	\begin{definition}
		Let $G$ be a connected Lie group, $\mu_D$ a probability measure on $G$ and let $c_1, c_2, \varepsilon > 0$. A measure $\mu_D$ is called \textbf{$(c_1, c_2, \varepsilon)$-Diophantine} if 
		\begin{enumerate}[(i)]
			\item $\mu_D$ is $(c_1 \log \frac{1}{\varepsilon}, c_2 \log\frac{1}{\varepsilon})$-Diophantine, i.e. for $n$ large enough, $$\sup_{H < G} \mu_D^{*n}(B_{\varepsilon^{c_1 n}}(H)) \leq \varepsilon^{c_2 n},$$ where the supremum is taken over all connected closed subgroups. 
			\item $\mathrm{supp}(\mu_D) \subset B_{\varepsilon}(e)$.
		\end{enumerate}
	\end{definition}
	
	The employed flattening results for $(c_1,c_2,\varepsilon)$-Diophantine measures (Corollary 4.2 of \cite{BoutonnetIoanaSalehiGolsefidy2017} and stated in Proposition~\ref{SuperFlatteningLemma} of Section \ref{SectionEffectiveEquidistribution}) were initiated by Bourgain in his construction of a monotone expander \cite{Bourgain2009} (see also \cite{BourgainYehudayoff2013}) and of a finitely supported measure on $\mathrm{SL}_2(\R)$ with absolutely continuous Furstenberg measure \cite{Bourgain2012}. In \cite{BoutonnetIoanaSalehiGolsefidy2017} the flattening results are used to establish a local spectral gap and, among other applications, to generalize Bourgain's examples of monotone expanders. The second author \cite{Kogler2022} used these results to prove a local limit theorem for $G$ acting on its associated symmetric space for $(c_1,c_2,\varepsilon)$-Diophantine measures, where $\varepsilon$ is sufficiently small in terms of $c_1$ and $c_2$. We further mention that by \cite{Lequen2022} and independently \cite{Kogler2022}, these measures have absolutely continuous Furstenberg measure, generalizing Bourgain’s example \cite{Bourgain2012} to simple Lie groups. 
	
	We state a result from \cite{BoutonnetIoanaSalehiGolsefidy2017} showing that there is an abundant collection of examples of finitely supported $(c_1, c_2, \varepsilon)$-Diophantine measures for arbitrarily small $\varepsilon$.
	
	\begin{theorem}(Theorem 3.1 of \cite{BoutonnetIoanaSalehiGolsefidy2017})\label{BIG17Theorem3.1}
		Let $G$ be a connected simple Lie group with finite center. Let $\Gamma < G$ be a countable dense subgroup and assume $\mathrm{Ad}(\Gamma)$ consists of matrices with algebraic entries with respect to a basis of $\mathfrak{g}$.
		
		Then there exist $c_1, c_2 > 0$ such that for every $\varepsilon_0 > 0$ there is $0 < \varepsilon <  \varepsilon_0$ and a finitely supported symmetric $(c_1, c_2, \varepsilon)$-Diophantine probability measure $\mu_D$ satisfying $\supp{\mu_D} \subset \Gamma \cap B_{\varepsilon}(e)$.
	\end{theorem}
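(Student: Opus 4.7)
The plan is to construct $\mu_D$ as the uniform probability measure on a finite symmetric subset $F \subset \Gamma \cap B_\varepsilon(e)$ chosen so that its elements have polynomially controlled algebraic heights and $F$ remains Zariski dense in $G$. The Diophantine estimate then splits into two tasks: bounding the mass of products that land strictly inside a proper connected subgroup $H < G$, and ruling out products that sit in the $\varepsilon^{c_1 n}$-neighborhood of $H$ without lying in $H$ itself. The constants $c_1, c_2$ will be fixed in terms of $\Gamma$ (and hence $G$) alone, while $\varepsilon$ is taken to be any small enough scale at which an appropriate $F$ can be produced.

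For the second task, I would use a Liouville-type inequality. The hypothesis that $\mathrm{Ad}(\Gamma)$ has algebraic entries supplies a number field $K$ with $\mathrm{Ad}(\Gamma) \subset \mathrm{GL}_{\dim G}(K)$. Products of $n$ elements from $F$ then have $\mathrm{Ad}$-entries in $K$ of height at most $H_\varepsilon^n$, where $H_\varepsilon$ denotes the maximum height among elements of $F$. Every connected closed proper subgroup $H < G$ is locally cut out by polynomials $P_j$ in the $\mathrm{Ad}$-coordinates of degree bounded in terms of $G$ alone, so $g \in B_{\varepsilon^{c_1 n}}(H)$ implies $|P_j(g)| \ll \varepsilon^{c_1 n}$, while Liouville's inequality yields $|P_j(g)| \geq C H_\varepsilon^{-Dn}$ whenever $P_j(g) \neq 0$, with $C, D$ depending only on $K$ and $G$. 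A key observation is that any element of $\Gamma \cap B_\varepsilon(e)$ already has height bounded below by a fixed negative power of $\varepsilon$ by an elementary Liouville estimate, so $F$ can be selected with $H_\varepsilon \leq \varepsilon^{-c_1/D}$ provided $c_1$ is taken large enough compared with the intrinsic parameters of $K$; this forces $P_j(g) = 0$ for all $j$, hence $g \in H$.

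For the first task, one needs a quantitative escape-from-subgroups statement of the form $\mu_D^{*n}(H) \leq \rho^n$ for all proper connected closed subgroups $H$, with $\rho = \rho(F) < 1$. Since $F$ is Zariski dense in $G$, such a bound follows in principle from an effective version of the Eskin-Mozes-Oh escape-from-subvarieties argument applied to the random walk driven by $\mu_D$. Shrinking $\varepsilon$ appropriately then forces $\rho \leq \varepsilon^{c_2}$ for a fixed $c_2 > 0$, matching the Diophantine exponent.

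The main obstacle is making the supremum over $H$ effective and uniform. Connected closed subgroups of $G$ are parameterized by their Lie subalgebras inside the Grassmannian of $\mathfrak{g}$; for a simple $G$ only finitely many conjugacy classes of proper subalgebras arise, reducing the supremum to a compact family, but within each class the defining polynomials $P_j$ vary with $H$ and one must verify that both the Liouville constants and the escape rate $\rho$ can be taken uniform over the family. Handling this uniformity simultaneously with the choice of $F$ and the calibration of $c_1, c_2, \varepsilon$ is the technical heart of the argument and is the step I expect to be the most delicate.
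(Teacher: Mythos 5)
This statement is not proved in the paper: it is quoted verbatim as Theorem~3.1 of \cite{BoutonnetIoanaSalehiGolsefidy2017} and used as a black box (see the first paragraph of the proof of Theorem~\ref{EffectiveDiameter}, where it is invoked to manufacture $\mu_D$). So there is no ``paper's own proof'' against which to compare; the intended route is simply to cite the external reference.

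That said, a few remarks on the proposal itself. Your two-pronged scheme --- a Liouville inequality to show that an algebraic product lying $\varepsilon^{c_1 n}$-close to a proper connected closed subgroup must lie in it, plus an effective escape-from-subgroups bound to control the mass inside the subgroup --- is indeed the correct \emph{shape} of the argument and matches the strategy in \cite{BoutonnetIoanaSalehiGolsefidy2017}. But two of the steps are substantially harder than the sketch suggests, and a third is stated backwards. First, the Liouville step requires that the defining polynomials $P_j$ of each proper connected closed $H<G$ have coefficients in a number field with uniformly bounded degree and height as $H$ ranges over \emph{all} proper connected closed subgroups; this is not automatic, since the Grassmannian parameterizing Lie subalgebras is a variety over $\Q$ but the individual subalgebras need not be defined over $\overline{\Q}$, and the argument in \cite{BoutonnetIoanaSalehiGolsefidy2017} is careful about which subgroups can be approximated by algebraic ones with controlled complexity before a Liouville bound can be applied. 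Second, the escape estimate $\mu_D^{*n}(H)\leq\varepsilon^{c_2 n}$ uniformly in $H$ is the genuinely deep input: the Eskin--Mozes--Oh argument gives escape from a \emph{fixed} subvariety, and passing to uniformity over the non-compact family of all subgroups, with an explicit exponential rate tied to~$\varepsilon$, is exactly what requires the random-walk / spectral machinery specific to \cite{BoutonnetIoanaSalehiGolsefidy2017} (and prior work of Salehi Golsefidy--Varj\'u); simply ``shrinking $\varepsilon$'' does not produce this. Third, the sentence claiming one can select $F$ with $H_\varepsilon\leq\varepsilon^{-c_1/D}$ by ``taking $c_1$ large'' is logically inverted: $c_1$ enters the Diophantine condition $\mu_D^{*n}(B_{\varepsilon^{c_1 n}}(H))\leq\varepsilon^{c_2 n}$ as a radius exponent, and making $c_1$ large \emph{weakens} the conclusion rather than loosening the height budget for $F$ --- the content is to obtain a \emph{fixed}, small $c_1$ working for arbitrarily small $\varepsilon$, not a $c_1$ adjustable per $F$. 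In the present paper you should instead simply record Theorem~\ref{BIG17Theorem3.1} as an application of Corollary~4.2 and Theorem~3.1 of \cite{BoutonnetIoanaSalehiGolsefidy2017}, as the authors do.
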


	To state our effective equidistribution result, denote for a continuous function $f$ on $X$ by $\mathrm{Lip}(f)$ the Lipschitz constant of $f$ defined in \eqref{LipschitzConstant} and by $\mathrm{Lip}(X)$ the space of Lipschitz functions on $X$. The $L^{\infty}$-norm on $X$ is written as $||\cdot||_{\infty}$.
	
	For a bounded operator $T$, the quantity $\rho(T)$ is the spectral radius of $T$. Let $\pi_X$ be the Koopman representation of $X$. We say that a probability measure $\mu$ on $G$ has a spectral gap on $X$ if $$\rho(\pi_X(\mu)|_{L_0^2(X)}) = \lim_{n \to \infty} \sqrt[n]{||\pi_X(\mu)|^n_{L_0^2(X)}||} < 1,$$ i.e. where $||\cdot||$ is the operator norm. It follows from Theorem C of \cite{Shalom2000} that if $G$ is non-compact, then a measure $\mu$ on $G$ that is not supported on a closed amenable subgroup has a spectral gap on $X$ (see Lemma~\ref{muspectralgap}). 
	
	\begin{theorem}\label{MainTheorem}
		Let $G$, $\Lambda$ and $X$ be as in Theorem~\ref{EffectiveDiameter}. Let $\mu$ be a compactly supported probability measure on $G$ with a spectral gap on $X$ and let $c_1, c_2 > 0$. 
		
		Then there are $\varepsilon_0 = \varepsilon_0(\mu,c_1,c_2) > 0$ and $\theta=\theta(\mu)$ such that for every $(c_1, c_2 , \varepsilon)$-Diophantine probability measure $\mu_D$ with $0<\varepsilon \leq \varepsilon_0$ the following holds: There exists $\beta = \beta(\mu,\eps)$ such that for every bounded Lipschitz function $f \in \mathrm{Lip}(X)$, $x_0 \in X$ and $n\geq 1$, $$\int f(g x_0) \, d(\mu^{*n}*\mu_D^{*\beta \cdot n})(g) = \int f \, d\Haarof{X} + O_{\Lambda, \mu, \mu_D}((\mathrm{Lip}(f) + \operatorname{ht}(x_0)||f||_{\infty}) e^{- \theta n}).$$
	\end{theorem}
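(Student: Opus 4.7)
The strategy is to combine the $L^2$-flattening result for $(c_1,c_2,\varepsilon)$-Diophantine measures (Proposition~\ref{SuperFlatteningLemma}) with the high-dimension-to-equidistribution mechanism (Corollary~\ref{HighDimensiontoQuantitativeEquidistribution}), which is the main technical innovation of the paper. The point is that
$$\mu^{*n} * \mu_D^{*\beta n} * \delta_{x_0} = \mu^{*n} * \nu_n,$$
where $\nu_n := \mu_D^{*\beta n} * \delta_{x_0}$ is a measure on $X$. It then suffices to arrange that $\nu_n$ satisfies the high-dimension hypothesis \eqref{HighDimensionProperty} of Corollary~\ref{HighDimensiontoQuantitativeEquidistribution} with dimension defect below the threshold $\gamma(\mu) > 0$ supplied by that corollary, after which the spectral gap of $\mu$ on $L^2_0(X)$ produces the exponential error term.

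First, I would apply Proposition~\ref{SuperFlatteningLemma} to $\mu_D$. The Diophantine assumption at scale $\varepsilon$, with $\varepsilon \leq \varepsilon_0(\mu,c_1,c_2)$ chosen sufficiently small, yields exponential $L^2$-decay of $\mu_D^{*m}$ down to the scale $\varepsilon^{c_1 m}$. I would then convert this $L^2$-flattening into the quantitative Frostman-type bound \eqref{HighDimensionProperty} in a standard way, producing a dimension defect for $\mu_D^{*m}$ on $G$ that shrinks as $m$ grows. Picking $\beta = \beta(\mu,\varepsilon)$ large enough ensures that $\mu_D^{*\beta n}$ has dimension at least $\dim G - \gamma(\mu)$ uniformly in $n$.

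Second, I would transfer this bound from $G$ to $X$ via the orbit map $g \mapsto g x_0$. On a ball around $e$ whose radius is controlled by the injectivity radius of $X$ at $x_0$, this map is bilipschitz with constants depending polynomially on $\operatorname{ht}(x_0)$; hence $\nu_n$ inherits the high-dimension property of $\mu_D^{*\beta n}$, with a loss accounted for by a factor of $\operatorname{ht}(x_0)$. Feeding $\nu_n$ into Corollary~\ref{HighDimensiontoQuantitativeEquidistribution} produces
$$\int f \, d(\mu^{*n}*\nu_n) = \int f \, d\Haarof{X} + O_{\Lambda,\mu,\mu_D}\bigl((\mathrm{Lip}(f) + \operatorname{ht}(x_0)||f||_{\infty}) e^{-\theta n}\bigr),$$
with $\theta = \theta(\mu) > 0$ arising from $\rho(\pi_X(\mu)|_{L_0^2(X)}) < 1$. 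The $\operatorname{ht}(x_0)\|f\|_{\infty}$ contribution in the error comes exactly from the dimension-transfer loss at $x_0$.

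The main obstacle is the coupled calibration of $\varepsilon_0$, $\beta$, and $\theta$: the flattening rate produced by Proposition~\ref{SuperFlatteningLemma} depends delicately on $\varepsilon$, so $\varepsilon_0$ must be taken small enough in terms of $\mu$ (and $c_1,c_2$) to guarantee that a finite $\beta = \beta(\mu,\varepsilon)$ of flattening steps per unit of $n$ drives the dimension defect below $\gamma(\mu)$, while $\theta$ is then forced to depend only on $\mu$ through the spectral gap. A secondary technical point is that iterated convolution enlarges the support; since $\supp{\mu_D}\subset B_\varepsilon(e)$ one has $\supp{\mu_D^{*\beta n}}\subset B_{\beta n \varepsilon}(e)$, which remains within the regime where flattening applies and within the bilipschitz neighborhood of $e$ used in the dimension transfer.
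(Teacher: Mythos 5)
Your high-level skeleton matches the paper's: flattening of $\mu_D$ on $G$ (Proposition~\ref{SuperFlatteningLemma}), transport to a Frostman bound on $X$, then leverage the spectral gap of $\mu$ by pairing $\pi_X(\mu)^n f$ against the ball-averaged density of $\nu_n = \mu_D^{*\beta n}*\delta_{x_0}$. However, your transfer from $G$ to $X$ contains a genuine gap. You claim the orbit map $g \mapsto gx_0$ is bilipschitz on a neighborhood of $e$ controlled by $\operatorname{inj}(x_0)$, and that $\supp{\mu_D^{*\beta n}} \subset B_{\beta n \varepsilon}(e)$ stays inside that neighborhood. This fails: $\beta n \varepsilon \to \infty$ as $n \to \infty$ (in the paper $\beta \asymp R(\mu)/\log(1/\varepsilon)$ is independent of $n$, so $\beta n\varepsilon \asymp n$). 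For large $n$ the support wraps around the lattice many times, and the preimage of $B_\delta^X(x)$ under the orbit map intersected with $\supp{\mu_D^{*\beta n}}$ is a union of $\gg 1$ translates $B_\delta(g_x \lambda g_0^{-1})$, $\lambda \in \Lambda$. The paper must therefore count these translates: Proposition~\ref{LeftRightTranslate} combines $\|\mu_D^{*m}*P_\delta\|_\infty \leq \delta^{-\gamma/2}$ with the lattice bound $|B_{2m\varepsilon} \cap y\Lambda x| \ll_\Lambda \|y\|^{O(1)} m_G(B_{2m\varepsilon})$ of Lemma~\ref{LatticeCountingUpperBound}, and the volume factor $m_G(B_{2m\varepsilon}) \ll e^{2E_5 m\varepsilon} = \delta^{-O(\varepsilon/\log\varepsilon^{-1})}$ is absorbed into a further loss of $\delta^{-\gamma/2}$ only because $\varepsilon \leq \varepsilon_0'$ is chosen small relative to $\gamma$. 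This absorption is one of the two reasons $\varepsilon_0$ must depend on $\mu$ (via $\gamma(\mu)$); your sketch misses it.

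A secondary issue is the appeal to Corollary~\ref{HighDimensiontoQuantitativeEquidistribution} as a black box. As stated, that corollary takes $\nu_n = \mu^{*n}*\delta_{x_0}$ (the same $\mu$ that has the spectral gap) and is proved by the same mechanism as Theorem~\ref{MainTheorem}, not before it; it does not have a generic interface accepting an arbitrary $\nu_n$. The passage from high dimension to equidistribution — Lemma~\ref{QnD} for non-divergence, Lemma~\ref{MeasureApproximation} to approximate $\int (\pi_X(\mu)^n f)\,d\nu_n$ by a genuine $L^2$ pairing against $h_{n,\delta,\eta}$, and the calibration $\gamma \asymp \min(\mathrm{gap}(\mu)/R(\mu),1)$, $\delta = e^{-2E_6R(\mu)n}$ balancing $\delta\,e^{E_6 R(\mu)n}\mathrm{Lip}(f)$ against $e^{-\mathrm{gap}(\mu)n/2}\delta^{-2\gamma}$ — is the paper's central contribution, so a complete proof must reproduce it rather than cite it.
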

	
	We emphasize that in the statement of Theorem~\ref{MainTheorem} $\mu$ is an arbitrary measure with a spectral gap on $X$ and that $\eps_0$ depends on $\mu$. In the outline of proofs in Section~\ref{SectionOutline}, it is exposed why this is necessary. Moreover, for convenience we use the convention to make no notational distinction between the possibly non-integer number $\beta \cdot n$ and the closest integer to it. In addition, we mention that we can give an explicit lower bound (see Theorem~\ref{HighDimensiontoQuantitativeEquidistribution0}) for the decay rate $\theta = \theta(\mu)$, depending on the spectral gap of $\mu$ and on the size of $\mathrm{supp}(\mu)$.
	
	All the previous results build on Theorem~\ref{MainTheorem} as Theorem~\ref{EffectiveDiameter} follows from Theorem~\ref{BIG17Theorem3.1} and Theorem~\ref{MainTheorem} if $G$ is non-compact.
	
	\subsection*{Structure of paper} We give an outline of proofs and summarize the notation and constants used in Section~\ref{SectionOutline}. In Section~\ref{SectionQuantitativeNonDivergence} we discuss the height function and quantitative non-divergence. Section~\ref{SectionEffectiveEquidistribution} is devoted to Theorem~\ref{MainTheorem} and in Section~\ref{SectionEffectiveDensity} we prove Theorem~\ref{EffectiveDiameter} and Theorem~\ref{EffectiveDensity}.
	
	\subsection*{Acknowledgement} The authors are grateful to Péter Varjú for encouraging us to work on the discussed circle of problems. Moreover, we thank Timothée Bénard, Emmanuel Breuillard, Amitay Kamber, Greg Kuperberg, Félix Lequen, Péter Varjú, Weikun He and the anonymous referee for comments on a preliminary version.

	\section{Outline and Notation}\label{SectionOutline}
	
	\subsection{Outline of proofs}
	
	As Theorem~\ref{EffectiveDiameter} relies on Theorem~\ref{MainTheorem}, we describe the proof of Theorem~\ref{MainTheorem} first. It comprises two steps and for simplicity of this outline we assume that $X$ is compact. First, we use the $L^2$-flattening results by \cite{BoutonnetIoanaSalehiGolsefidy2017} to show that after a small number of steps, the measure $\nu_n = \mu_D^{*n} * \delta_{x_0}$ has high dimension. Indeed, we will show in Proposition~\ref{HighDimension} that for any $c_1,c_2,\gamma > 0$ and every $(c_1,c_2, \varepsilon)$-Diophantine probability measure $\mu_D$ for $\varepsilon$ sufficiently small in terms of $c_1,c_2$ and $\gamma$ it holds,
	\begin{equation}\label{HighDimensionProperty}
		\nu_n(B_\delta^X(x))  \leq \delta^{\dim G - \gamma}
	\end{equation}
	for any $\delta > 0$ small enough, $x \in X$ and $n \asymp C_0'\frac{\log \frac{1}{\delta}}{\log \frac{1}{\varepsilon}} $, where $C_0' = C_0'(c_1,c_2,\gamma)$. We refer to \eqref{HighDimensionProperty} as having high dimension since $\Haarof{X}(B_{\delta}(x)) \asymp \delta^{\dim G}$ and therefore $\nu_n$ behaves comparable to $\Haarof{X}$ after only $O(\log \frac{1}{\delta})$ many steps. The $L^2$-flattening results of \cite{BoutonnetIoanaSalehiGolsefidy2017} imply \eqref{HighDimensionProperty} on $G$ (see \eqref{LinftySuperFlattening}) and we are able to pass from high dimension on $G$ to high dimension on $X$ using that $\mu_D$ is supported close to the identity.
	
	The second step is the main new contribution of this paper and amounts to deducing quantitative equidistribution from high dimension \eqref{HighDimensionProperty}. This step can be seen as an analogue of the Sarnak-Xue trick \cite{SarnakXue1991}, as used by \cite{BourgainGamburd2008Invent}, \cite{BourgainGamburd2008Annals}, \cite{deSaxce2013} \cite{BenoistDeSaxce2016}, or of the methods by Venkatesh \cite[Section 3.1]{Venkatesh2010}, as applied by \cite{LindenstraussMohammadi2022} and \cite{LindenstraussMohammadiWang2022}. In our setting, we exploit the spectral gap of $\mu$ on $X$. Write 
	\begin{equation}\label{gapdef}
		\mathrm{gap}(\mu) = -\log \rho(\pi_X(\mu)|_{L^2_0(X)})
	\end{equation}
	such that for $f_1, f_2 \in L^2(X)$ and any $0 < c < \mathrm{gap}(\mu)$,
	\begin{equation}\label{ExponentialMixing}
		|\langle \pi_X(\mu)^n f_1, f_2 \rangle - \langle f_1,1 \rangle \langle 1, f_2 \rangle | \leq e^{-c \cdot n}||f_1||_2 ||f_2||_2
	\end{equation} for sufficiently large $n$.   
	
	The proof of Theorem \ref{MainTheorem} proceeds by reducing to functions with $\int f \, dm_X = 0$. We use the spectral gap of $\mu$ to upgrade high dimension of $\nu_n =  \mu_D^{*\beta \cdot n} * \delta_{x_0}$ to effective equidistribution of $\mu^{*n} * \nu_n$. We introduce the additional parameter $\beta$ in order to ensure that $\beta \cdot n \asymp C_0'\frac{\log \frac{1}{\delta}}{\log \frac{1}{\varepsilon}}$. We then write $$\int f(gx_0) \, d(\mu^{*n}*\mu_D^{*\beta \cdot n})(g) = \int \pi_X^n(\mu)f \, d\nu_{n}.$$ In order to apply the spectral gap, one approximates the latter integral with the inner product $$\langle \pi_{X}(\mu)^n f, h_{n,\delta} \rangle = \int \pi_X(\mu)^n f  \cdot h_{n,\delta} \, d\Haarof{X} \quad\text{ for }\quad h_{n, \delta}(x) = \frac{\nu_{n}(B_{\delta}(x))}{\Haarof{G}(B_{\delta})}.$$ In Section~\ref{SectionCompletionMainTheorem} it is shown that this approximation is possible up to an error of size $$O(\delta \cdot \mathrm{Lip}(\pi_X(\mu)^n f)) = O(\delta \cdot e^{O(R(\mu))n} \cdot \mathrm{Lip}(f))$$ for $R(\mu) = \min\{R>0: \operatorname{supp}(\mu) \subseteq B_R\}.$ We then use the spectral gap to bound $|\langle \pi_{X}(\mu)^n f, h_{n,\delta} \rangle| \leq e^{-cn} \cdot ||f||_2 \cdot ||h_{n,\delta}||_2$ and finally high dimension \eqref{HighDimensionProperty} to show that $||h_{n,\delta}||_2 \ll \delta^{-2\gamma}$. Choosing $\delta$ decaying appropriately in $n$, Theorem \ref{MainTheorem} follows. A further difficulty is to show Theorem \ref{MainTheorem} also for non-compact $X$, which we deal with results from section \ref{SectionQuantitativeNonDivergence}.
	
	A careful analysis of the above error terms reveals that we need to set $\gamma \ll_{\Lambda} \min(\frac{\mathrm{gap}(\mu)}{R(\mu)},1)$. The latter choice determines how small $\eps$ needs to be in order for \eqref{HighDimensionProperty} to hold. On the other hand, in \cite{BourgainGamburd2008Invent}, \cite{BourgainGamburd2008Annals} and \cite{BenoistDeSaxce2016}, in order to deduce spectral gap from high dimension, it is necessary to be at dimension $\dim G - \gamma$ for $\gamma > 0$ an absolute constant depending only on $G$. In our case the measure $\mu$ determines $\gamma > 0$. This difference prevents us currently from proving effective equidistribution of $\mu_D^{*n} * \delta_{x_0}$, for $\eps$ small enough in $c_1,c_2$ as this requires us to control the size of $\mathrm{gap}(\mu_D)$.  
	
	All further results in this paper rely on Theorem~\ref{MainTheorem}. Indeed, Theorem~\ref{EffectiveDiameter} follows from Theorem~\ref{BIG17Theorem3.1} and Theorem~\ref{MainTheorem}. To explain the idea of the proof of Theorem~\ref{EffectiveDensity}, we fix $y \in X$ and $r > 0$ and discuss how to show that at least one of the points $Y_{1,x_0}, \ldots , Y_{\lceil r^{-A} \rceil, x_0}$ is contained in $B_{r}(x)$ with high probability. Note that by Theorem~\ref{EffectiveDiameter} for any $x\in X$ and $r > 0$ there is $g \in S^{O(\log r^{-1})}$ such that $gx \in B_{r}(y)$. This may be used to show that for any $x \in X$, 
	\begin{equation}\label{HittingProbAfterLogSteps}
		\mathbb{P}[Z_{\lceil C \log r^{-1} \rceil} \cdots Z_1x \in B_{r}(y)] \geq r^B  
	\end{equation}
	for constants $B,C > 0$. To complete the proof, we consider the sequence $$Y_{1,x_0}, \ldots , Y_{\lceil r^{-A} \rceil, x_0}.$$ We may split the rather long interval $1, \ldots , \lceil r^{-A} \rceil$ into short intervals of length $C\log r^{-1}$ resulting in $\frac{\lceil r^{-A} \rceil}{C\log r^{-1}} \geq r^{-3A/4}$ many such intervals for $r$ small enough. Applying \eqref{HittingProbAfterLogSteps} to each of these intervals, we can bound the probability that none of the points  $Y_{1,x_0}, \ldots , Y_{\lceil r^{-A} \rceil, x_0}$ are contained in $B_{r}(x)$ by $(1 - r^B)^{r^{-3A/4}} \leq \exp(-r^{-A/2})$ for $r$ small enough and $A$ large enough. In the compact case, this argument applies (see Theorem~\ref{CompactEffectiveDensity}). However, if $X$ is non-compact, as we additionally are required to consider non-divergence, we only arrive at the weaker bound \eqref{ProbEstimateEffectiveDensity}.
	
	\subsection{Notation}\label{Notation} Throughout this paper, $G$ denotes a connected simple Lie group with finite center. We use the asymptotic notation $X \ll Y$ or $X = O(Y)$ to denote that $|X| \leq CY$ for a constant $C > 0$. If the constant $C$ depends on additional parameters we add subscripts, unless the quantity depends on the fixed group $G$ in which case we omit additional subscripts for convenience.  Moreover, $X \asymp Y$ denotes $X \ll Y$ and $Y \gg X$.  
	
	Denote by $d: G \times G \to \R_{\geq 0}$ a right invariant metric. For $R > 0$ and $h \in G$, write $$B_R(h) = \{ g \in G \,:\, d(g,h) < R \}.$$ For $x = e$, we abbreviate $B_R = B_R(e)$. Moreover, for any closed subset $H \subset G$, we define $$B_R(H) = \{ g \in G \,:\, d(g,H) < R \},$$ where $d(g,H) = \inf_{h \in H} d(g,h)$. 
	
	Let $\Haarof{G}$ be a Haar measure on $G$. Then as $B_{R}(x) = B_{R} \cdot x$, it holds that $\Haarof{G}(B_{R}(x)) = \Haarof{G}(B_{R})$.
	
	We fix a basis of $\mathfrak{g}=\operatorname{Lie}(G)$, inducing the associated euclidean norm $||\cdot||$ on $\mathfrak{g}$. For $g\in G$ we define
	$$\|g\|:=\max_{i,j}\{|\operatorname{Ad}(g)_{ij}|,|\operatorname{Ad}(g^{-1})_{ij}|\},$$
	where $\operatorname{Ad}(g)_{ij}$ and $\operatorname{Ad}(g^{-1})_{ij}$ denote the matrix coefficients of $\operatorname{Ad}(g)$ and $\operatorname{Ad}(g^{-1})$ respectively with respect to the basis we fixed. Note that
	\begin{equation}\label{NormProperties}
		\|g^{-1}\|=\|g\|,\quad\|g_1g_2\|\ll\|g_1\|\|g_2\|,\quad\|\operatorname{Ad}(g)\|_{\operatorname{op}}\ll \|g\| 	\end{equation}
	for any $g,g_1,g_2\in G$, where $\|\operatorname{Ad}(g)\|_{\operatorname{op}}$ denotes the operator norm of the adjoint action of $g$ with respect to the chosen euclidean norm on $\mathfrak{g}$. 
	
	For a lattice $\Lambda < G$, write $X = G/\Lambda$ with the endowed metric $d_X(x,y) = \inf_{\lambda \in \Lambda} d(g_x\lambda,g_y)$ for $g_x, g_y \in G$ such that $x = g_x \Lambda$ and $y = g_y \Lambda$. For $R > 0$ and $x \in X$ we denote $$B_R^X(x) = \{ y \in X \,:\, d_X(x,y) < R \}. $$ For $x\in X$ we denote by $\operatorname{inj}(x)$ the maximal injectivity radius at $x\in X$, which is the supremum of $r>0$ such that the map $g\mapsto gx$ is an isometry from $B_r(\operatorname{id})$ to $B_r(x)$.
	
	We say that a function $f \in C(X)$ is Lipschitz if its Lipschitz constant 
	\begin{equation}\label{LipschitzConstant}
		\mathrm{Lip}(f) =  \sup_{x,y \in X, x \neq y} \frac{|f(x) - f(y)|}{d_X(x,y)}
	\end{equation}
	is finite. We denote by $\mathrm{Lip}(X) \subset C(X)$ the space of Lipschitz functions on $X$. 
	
	\subsection{Constants}
	
	For convenience we list here the constants used in this paper. We denote by $E_1, E_2, \ldots $ constants depending only on $G$ and $\Lambda$:
	\begin{enumerate}
		\item $E_1$ is defined in Proposition~\ref{heightftn} (1).
		\item $E_2$ is defined in Proposition~\ref{heightftn} (3).
		\item $E_3$ is only used in the proof of Proposition~\ref{heightftn}.
		\item $E_4$ is defined in Proposition~\ref{HighDimension}.
		\item $E_5$ is the constant depending only $G$ such that $\Haarof{G}(B_r) \ll e^{E_5r}$ for all $r > 0$.
		\item $E_6$ is the constant depending only $G$ such that $||g|| \ll e^{E_6R}$ for all $g\in B_R$.
		\item $E_7$ is defined in Theorem~\ref{HighDimensiontoQuantitativeEquidistribution}.
		\item $E_8:= E_2^{-1}E_1$ is introduced in Lemma~\ref{MeasureApproximation}.
	\end{enumerate}
	Denote by $\kappa_1, \kappa_2, \ldots $ further constants depending only on $G$ and $\Lambda$:
	\begin{enumerate}
		\item $\kappa_1$ is defined in Proposition~\ref{heightftn} (1).
		\item $\kappa_2$ is defined in Proposition~\ref{heightftn} (2).
		\item $\kappa_3$ is only used in the proof of Proposition~\ref{heightftn}.
	\end{enumerate}
	
	We often don't introduce constants for quantities that depend on $\mu$ or further quantities, yet we do so in the following cases:
	\begin{enumerate}
		\item $\varepsilon_0 = \varepsilon_0(c_1,c_2,\gamma)$ is from Theorem~\ref{MainTheorem}.
		\item $C_0 = C_0(c_1,c_2,\gamma)$ is from the proof of Theorem~\ref{HighDimensiontoQuantitativeEquidistribution0}.
		\item $\varepsilon_0' = \varepsilon_0(c_1,c_2,\gamma)$ is from Proposition~\ref{HighDimension}.
		\item $C_0' = C_0'(c_1,c_2,\gamma)$ is from Proposition~\ref{HighDimension}.
		\item $\varepsilon_0'' = \varepsilon_0(c_1,c_2,\gamma)$ is from Proposition~\ref{SuperFlatteningLemma}.
		\item $C_0'' = C_0'(c_1,c_2,\gamma)$ is from is from Proposition~\ref{SuperFlatteningLemma}.
		\item $\mathrm{gap}(\mu) = -\log ||\pi_X(\mu)|_{L^2_0(X)}||$.
		\item $R(\mu) = \min\{R>0: \operatorname{supp}(\mu) \subseteq B_R\}.$
	\end{enumerate}

	\section{Proof of Theorem~\ref{MainTheorem}}\label{SectionEffectiveEquidistribution}
	
	The reader may recall the outline of the proof of Theorem~\ref{MainTheorem} given in Section~\ref{SectionOutline}. We first discuss quantitative non-divergence in Section~\ref{SectionQuantitativeNonDivergence}. In Section~\ref{SectionHighDimension}, we show that $\nu_n = \mu_D^{*n} * \delta_{x_0}$ has high dimension on $X$ for a suitable constant $\beta$. This exploits flattening results on $G$ and that $\mu_D$ is supported close to the identity. Then, in Section~\ref{SectionCompletionMainTheorem}, we use the spectral gap of $\mu$ to deduce effective equidistribution of $\mu^{*n}*\nu_n$, assuming that $\mu_D$ has sufficiently strong Diophantine properties and is close enough to the identity in terms of $\mu$. 
	
	\subsection{Quantitative Non-Divergence}\label{SectionQuantitativeNonDivergence}
	We discuss the height function $\mathrm{ht} : X \to \R_{\geq 1}$ as mentioned in the introduction. If $X$ is compact, we set $\mathrm{ht} \equiv 1$ and for the remainder of this section we assume that $X$ is non-compact. For the purposes of the later sections, we need to quantitatively control the recurrence to compact subsets. The following proposition follows from constructions of $\mathrm{ht}$ by \cite{EskinMargulis2004} and by \cite{BenoistQuint2011} and \cite{BenoistQuint2012}. 
	
	\begin{proposition}\label{heightftn}
		Let $G$ be a semisimple Lie group, $\Lambda < G$ a non-uniform irreducible lattice in $G$ and $X = G/\Lambda$. There exists a proper continuous function $\operatorname{ht}:X\to \R_{\geq 1}$ such that the following properties hold. All the constants in this proposition depend on $G$ and $\Lambda$.
		\begin{enumerate}
			\item There exist $0<\kappa_1<1$ and $E_1>0$ such that $\operatorname{ht}(x) \geq E_1\operatorname{inj}(x)^{-\kappa_1}$ for any $x\in X$.
			\item There exists $\kappa_2>0$ such that for all $x\in X$ there is $g \in G$ such that $x = g\Lambda$ and $\|g\|\ll \operatorname{ht}(x)^{\kappa_2}$.
			\item (Log-Lipschitz condition) There exists $E_2>1$ such that for any $g\in B_1^G(e)$ and $x \in X$,
			\begin{equation}\label{LogLipschitz}
				E_2^{-1}\operatorname{ht}(x)\leq\operatorname{ht}(gx)\leq E_2\operatorname{ht}(x).
			\end{equation}
			\item (Contraction Hypothesis) Suppose that $\mu$ is a compactly supported probability measure on $G$ whose support generates a Zariski-dense semigroup. Then there exists $0<a=a(\Lambda,\mu)<1$, $b=b(\Lambda,\mu)>0$, and $N=N(\Lambda,\mu)\in\mathbb{N}$ such that for any $x\in X$
			\begin{equation}\label{ContractionHypothesis}
				\int \operatorname{ht}(gx) \, d\mu^{*N}(g) \leq a\operatorname{ht}(x) + b.
			\end{equation}
		\end{enumerate}
	\end{proposition}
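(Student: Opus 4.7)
The plan is to take $\mathrm{ht}$ to be (a power of) the Eskin--Margulis $\alpha$-function attached to the cusps of $X$, and to verify properties (1)--(4) by combining reduction theory for $\Lambda$ with the random-walk contraction estimates of Eskin--Margulis and Benoist--Quint. Since $\Lambda$ is a non-uniform irreducible lattice in the semisimple group $G$, standard reduction theory produces finitely many $\Lambda$-orbits of cusps. For each cusp fix a rational parabolic $P_i \leq G$ with unipotent radical $U_i$, and let $v_i \in V_i := \bigwedge^{\dim U_i}\mathfrak{g}$ be a Plücker vector for $\mathrm{Lie}(U_i)$, which is $P_i$-invariant up to scalar. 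With a $K$-invariant Euclidean norm on each $V_i$, define
$$\alpha(x) \;=\; \max_i\; \sup_{\lambda \in \Lambda} \|\Ad(g\lambda) v_i\|^{-1} \quad\text{for } x = g\Lambda,$$
and set $\mathrm{ht}(x) := \max(1,\alpha(x)^{\eta})$ for a small $\eta>0$ to be fixed later. Properness, continuity, and the fact that the sup is locally attained on a finite set follow from the standard Siegel-set description of $X$.

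Properties (1)--(3) then come essentially from the construction. For (3), if $g_0 \in B_1^G(e)$ then $\|\Ad(g_0)\|_{\mathrm{op}}$ is bounded, so $\alpha(g_0 x) \asymp \alpha(x)$ with multiplicative constant depending only on $G,\Lambda$, and raising to the $\eta$-th power gives (3). For (2), if $\mathrm{ht}(x)\leq H$ then reduction theory places $x$ in a finite union of Siegel sets of bounded depth in terms of $H^{1/\eta}$, and the highest weights of the $V_i$ translate this into $\|g\|\ll H^{\kappa_2}$ for a suitable representative $g$. For (1), if $\mathrm{inj}(x)<r$ then some $\lambda \in \Lambda \setminus \{e\}$ satisfies $g\lambda g^{-1}\in B_r$; by the Margulis lemma $\lambda$ lies in a conjugate of some $U_i$, so $\Ad(g\lambda)v_i$ is forced to be small (roughly a power of $r$ in directions transverse to $v_i$), which yields $\alpha(x)\gg r^{-\kappa_1'}$ for some $\kappa_1'>0$ depending on the representations. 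Choosing $\kappa_1 := \eta\kappa_1' \in (0,1)$ (by shrinking $\eta$) gives (1).

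The substantive step is (4). Because $\supp{\mu}$ generates a Zariski-dense subsemigroup, the top Lyapunov exponent of the induced random walk on each $V_i$ is strictly positive. The mean contraction estimate of Eskin--Margulis in the arithmetic case and of Benoist--Quint \cite{BenoistQuint2011},\cite{BenoistQuint2012} in general then produces $\eta_0 > 0$ such that for every $0<\eta<\eta_0$ there exist $N \in \N$, $0<a<1$ and $b>0$ with
$$\int \|\Ad(g) v\|^{-\eta}\, d\mu^{*N}(g) \;\leq\; a\, \|v\|^{-\eta} + b$$
for every nonzero $v$ in each $V_i$. To pass from this single-vector bound to the $\sup_{\lambda \in \Lambda}$ defining $\alpha$, I would invoke the Eskin--Margulis ``few short vectors'' lemma: at each $x$, the collection of $\lambda$-translates of a given $v_i$ whose image under $\Ad(g)$ is near the minimum is bounded in cardinality by a constant depending only on $G,\Lambda$ (a Minkowski successive-minima argument). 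This converts the supremum into a bounded sum of single-vector integrals, to each of which the above estimate applies, and after summing over the finitely many cusps one obtains (4) for $\mathrm{ht}$.

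The main obstacle is exactly this last step in (4): controlling the interaction between $\sup_{\lambda \in \Lambda}$ and the $\mu^{*N}$-integration, since a priori different random trajectories push different $\Lambda$-translates to small length. The Eskin--Margulis / Benoist--Quint approach handles this uniformly in $x$ by combining the single-vector contraction with the ``finitely many competing short vectors'' phenomenon. One must then check that the exponent $\eta$, the integer $N$, the constants $a,b$, and the geometric exponents $\kappa_1,\kappa_2$ can be fixed consistently; all other properties were already arranged so that shrinking $\eta$ only helps, so the proposition follows after a single common choice of $\eta$.
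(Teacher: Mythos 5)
Your proposal follows exactly the route the paper indicates: Proposition~\ref{heightftn} is stated in the paper without proof and is attributed to the height-function constructions of Eskin--Margulis \cite{EskinMargulis2004} and Benoist--Quint \cite{BenoistQuint2011}, \cite{BenoistQuint2012}, and your sketch is a reconstruction of what those works provide. In particular, you correctly single out the Contraction Hypothesis~(4) as the substantive item, deducible from the Benoist--Quint moment estimate for Zariski-dense measures together with a finite-competitors argument to pass from single Plücker vectors to the supremum over $\Lambda$.

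The one place where the sketch has a genuine gap is your argument for~(1). You assert that if $g\lambda g^{-1}\in B_r$ and $\lambda$ lies in a ($\Lambda$-)conjugate of some $U_i$, then $\|\Ad(g\lambda)v_i\|$ is small. This does not follow as stated: since $U_i$ is normal in $P_i$ and unipotent, $\Ad(\lambda)$ preserves $\operatorname{Lie}(U_i)$ with determinant one, so it \emph{fixes} the Plücker vector, i.e.\ $\Ad(g\lambda)v_i = \Ad(g)v_i$ (up to replacing $v_i$ by a $\Lambda$-translate). The hypothesis $g\lambda g^{-1}\in B_r$ only shows that $\Ad(g)$ contracts \emph{one} direction of $\operatorname{Lie}(U_i)$ by a factor $\asymp r$; it does not make the top exterior power $\Ad(g)v_i$ small, since the complementary directions of $\operatorname{Lie}(U_i)$ may simultaneously be expanded by $\Ad(g)$. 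The correct argument goes through reduction theory: a small conjugate forces $g$ into the deep part of a Siegel domain associated to the cusp, and there the $A$-component contracts $v_i$ at a definite polynomial rate in the injectivity radius; only then does one obtain $\alpha(x)\gg \operatorname{inj}(x)^{-\kappa_1'}$. A secondary issue is that the Margulis lemma by itself only yields that the group generated by short elements is (virtually) nilpotent; identifying this group with the unipotent radical of a $\Lambda$-rational parabolic requires the Kazhdan--Margulis / Garland--Raghunathan description of the thin part of a non-uniform lattice, which you should cite explicitly.
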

	
	In particular, Proposition \ref{heightftn} gives the following quantitative non-divergence result, which is due to \cite[Lemma 3.1]{EskinMargulis2004}, yet we reconstruct the proof to make the dependence on the height explicit.
	
	\begin{lemma}\label{QnD}
		Suppose that $\mu$ is a compactly supported probability measure on $G$ whose support generates a Zariski-dense semigroup. Then for any $n\ge 1$, $x\in X$, and $\mathbf{h}>0$,
		\begin{equation}\label{QnDestimate}
			\mu^{*n}(\{g\in G: \operatorname{ht}(gx)\ge \mathbf{h}\})\ll_{\Lambda, \mu} \mathbf{h}^{-1} \cdot  \operatorname{ht}(x).
		\end{equation}
	\end{lemma}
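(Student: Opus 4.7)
The plan is to reduce to bounding the first moment $M_x(n) := \int \operatorname{ht}(gx)\, d\mu^{*n}(g)$ by a multiple of $\operatorname{ht}(x)$ uniformly in $n$, and then to conclude by Markov's inequality, since
$$\mu^{*n}(\{g\in G: \operatorname{ht}(gx)\ge \mathbf{h}\}) \leq \mathbf{h}^{-1}\, M_x(n).$$

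The first step is to iterate the Contraction Hypothesis \eqref{ContractionHypothesis}. Using Fubini and the fact that $\mu^{*(n+N)} = \mu^{*N} * \mu^{*n}$, I would derive the recursion
$$M_x(n+N) = \int M_x(n)\Big|_{x\mapsto g'x}\cdots \text{(abuse of notation)}\cdots\,d\mu^{*N}(g'),$$
more precisely $M_x(n+N) = \iint \operatorname{ht}(gg'x)\,d\mu^{*n}(g)\,d\mu^{*N}(g')$; swapping the order and applying \eqref{ContractionHypothesis} with the base point $gx$ gives
$$M_x(n+N) \leq a\, M_x(n) + b.$$
Unrolling this along the arithmetic progression $kN$ and using $M_x(0)=\operatorname{ht}(x)\geq 1$ yields
$$M_x(kN) \leq a^k\operatorname{ht}(x) + \frac{b}{1-a} \leq \Big(1 + \frac{b}{1-a}\Big)\operatorname{ht}(x).$$

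The second step handles a general $n$ by splitting $n=kN+r$ with $0\leq r<N$. Writing $\mu^{*n} = \mu^{*kN}*\mu^{*r}$ (or $\mu^{*r}*\mu^{*kN}$, depending on one's convolution convention) and applying Fubini gives
$$M_x(n) = \int M_{g'x}(kN)\, d\mu^{*r}(g') \ll_{\Lambda,\mu} \int \operatorname{ht}(g'x)\,d\mu^{*r}(g').$$
Since $\mu$ is compactly supported, $\operatorname{supp}(\mu^{*r})\subset B_{NR(\mu)}$, and iterating the Log-Lipschitz condition \eqref{LogLipschitz} along a chain of at most $\lceil NR(\mu)\rceil$ unit-ball displacements bounds $\operatorname{ht}(g'x)\leq E_2^{\lceil NR(\mu)\rceil}\operatorname{ht}(x)$ for every $g'\in \operatorname{supp}(\mu^{*r})$. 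Combining, $M_x(n)\ll_{\Lambda,\mu}\operatorname{ht}(x)$ for all $n\geq 1$, and Markov yields \eqref{QnDestimate}.

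There is no real obstacle: the only subtlety is the passage from multiples of $N$ to general $n$, which is cleanly absorbed into the constant via the Log-Lipschitz property and the compact support of $\mu$. Both $a,b,N$ and the Log-Lipschitz factor $E_2^{\lceil NR(\mu)\rceil}$ depend only on $\Lambda$ and $\mu$, so the final implicit constant has the claimed dependence.
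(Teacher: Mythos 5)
Your proof is correct and follows essentially the same route as the paper: iterate the contraction hypothesis to bound the first moment $\int \operatorname{ht}(gx)\,d\mu^{*kN}(g)$ by a fixed multiple of $\operatorname{ht}(x)$, absorb the remainder $r<N$ using the log-Lipschitz property of $\operatorname{ht}$ together with the compact support of $\mu$, and conclude by Markov's inequality. The only cosmetic difference is the order of operations—you pass from general $n$ to the multiple $kN$ at the level of first moments, whereas the paper does this step directly on the sublevel sets before estimating the integral—but the underlying ideas and the constants produced are the same.
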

	
	\begin{proof}
		Let $0<a<1$, $b>0$, and $N\in\mathbb{N}$ be as in (4) of Proposition \ref{heightftn}. For any $n\ge 1$, we may write $n=qN+r$ for some $q\in\mathbb{Z}_{\ge 0}$ and $0\leq r\leq N-1$. By (3) of Proposition \ref{heightftn}, there exists $C_1=C_1(\Lambda,\mu)>1$ such that
		$$C_1^{-1}\operatorname{ht}(x)\leq\operatorname{ht}(gx)\leq C_1\operatorname{ht}(x)$$
		for any $g\in \bigcup_{\ell = 1}^N\operatorname{supp}(\mu^{*\ell})$. Thus
		\begin{equation}\label{qNbound}
			\mu^{*n}(\{g\in G: \operatorname{ht}(gx)\ge \mathbf{h}\})\leq\mu^{*(qN)}(\{g\in G: \operatorname{ht}(gx)\ge C_1^{-1}\mathbf{h}\}).
		\end{equation}
		On the other hand, after iterating \eqref{ContractionHypothesis} and summing the geometric series, we get
		\begin{equation}\label{iteration}
			\int \operatorname{ht}(gx) \, d\mu^{*qN}(g) \leq a^{q}\operatorname{ht}(x)+(b+ab+\cdots+a^{q-1}b)\leq \operatorname{ht}(x)+\frac{b}{1-a}
		\end{equation}
		for any $q\in\mathbb{N}$. It follows that
		\begin{equation}\label{iteration}
			\begin{aligned}
				\mu^{*(qN)}(\{g\in G: \operatorname{ht}(gx)\ge C_1^{-1}\mathbf{h}\})&\leq (C_1^{-1}\mathbf{h})^{-1}\int \operatorname{ht}(gx) \, d\mu^{*qN}(g)\\
				&\leq C_1\mathbf{h}^{-1}(\operatorname{ht}(x)+C_2),
			\end{aligned}
		\end{equation}
		where $C_2=\frac{b}{1-a}$. Combining \eqref{qNbound} and \eqref{iteration}, we obtain \eqref{QnDestimate} as $\mathrm{ht} \geq 1$.
	\end{proof}

	\subsection{High Dimension}\label{SectionHighDimension}
	
	In this section we establish that $(c_1, c_2,\varepsilon)$-Diophantine measures have high dimension on $X$. We note that for a fixed $(c_1, c_2,\varepsilon)$-Diophantine measure, we can only get to dimension close to $\dim G$, yet not arbitrarily close to $\dim G$.
	
	\begin{proposition}(High Dimension)\label{HighDimension}
		Let $G$ be a connected simple Lie group with finite center and let $\Lambda < G$ be a lattice. Let $\gamma, c_1, c_2 > 0$. Then there exist $\varepsilon_0' = \varepsilon_0'(c_1, c_2, \gamma)$ and $C_0'=C_0'(c_1,c_2,\gamma)$ such that every $(c_1, c_2, \varepsilon)$-Diophantine probability measure $\mu_D$ with $0<\varepsilon\leq\varepsilon_0'$ satisfies the following. 
		
		For $\nu_n = \mu_D^{*n}*\delta_{x_0}$ with $x_0 \in X$ and $\delta > 0$ small enough,  $$\nu_n(B_{\delta}^X(x)) \leq \mathrm{ht}(x)^{E_4}\delta^{\dim G - \gamma}$$ for $n \asymp C_0'\frac{\log \frac{1}{\delta}}{\log \frac{1}{\varepsilon}}$, where $E_4>0$ is a constant depending on $G$.
	\end{proposition}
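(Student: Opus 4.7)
The plan is to first establish high dimension of $\mu_D^{*n}$ on $G$ via the $L^2$-flattening result of Proposition~\ref{SuperFlatteningLemma}, and then to transfer this bound to $X = G/\Lambda$ by decomposing over fibres of the projection $\pi: G \to X$. Applying Proposition~\ref{SuperFlatteningLemma} with $\gamma/3$ in place of $\gamma$ produces constants $\varepsilon_0''$ and $C_0''$ such that, for $\varepsilon \leq \varepsilon_0''$ and $n \asymp C_0'' \log(1/\delta)/\log(1/\varepsilon)$,
\[
\mu_D^{*n}(B_\delta(g)) \leq \delta^{\dim G - \gamma/3} \quad \text{uniformly in } g \in G.
\]

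For the transfer to $X$, I would use Proposition~\ref{heightftn}(2) to select lifts $h_0, h_x \in G$ of $x_0$ and $x$ with $\|h_0\| \ll \mathrm{ht}(x_0)^{\kappa_2}$ and $\|h_x\| \ll \mathrm{ht}(x)^{\kappa_2}$. Unwinding the definition of $d_X$ and using right-invariance of $d$, the preimage in $G$ of $B_\delta^X(x)$ under the orbit map $g \mapsto g x_0$ equals $\bigcup_{\lambda \in \Lambda} B_\delta(h_x \lambda h_0^{-1})$. A union bound gives
\[
\nu_n(B_\delta^X(x)) \leq \sum_{\lambda \in \Lambda} \mu_D^{*n}\bigl(B_\delta(h_x \lambda h_0^{-1})\bigr),
\]
and since $\operatorname{supp}(\mu_D^{*n}) \subseteq B_{n\varepsilon}(e)$, only $\lambda$ with $\|\lambda\| \ll \mathrm{ht}(x)^{\kappa_2} \mathrm{ht}(x_0)^{\kappa_2} e^{E_6 n\varepsilon}$ contribute, by submultiplicativity of $\|\cdot\|$.

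The combinatorial heart of the argument is bounding the number of such $\lambda$. Using discreteness of $\Lambda$ together with its uniform separation $r_\Lambda := \min_{\lambda \neq e} d(e,\lambda) > 0$ in the right-invariant metric and the volume bound $\operatorname{vol}(B_R) \ll e^{E_5 R}$, a ball-packing argument shows that $\#\{\lambda \in \Lambda : \|\lambda\| \leq M\} \ll M^E$ for some $E = E(G, \Lambda)$. Combining this with the flattening bound and the estimate $n\varepsilon \asymp C_0''\varepsilon \log(1/\delta)/\log(1/\varepsilon)$ yields
\[
\nu_n(B_\delta^X(x)) \ll \mathrm{ht}(x)^{E\kappa_2} \mathrm{ht}(x_0)^{E\kappa_2} \cdot \delta^{-E E_6 C_0'' \varepsilon/\log(1/\varepsilon)} \cdot \delta^{\dim G - \gamma/3}.
\]
Taking $\varepsilon_0'$ small enough so that $E E_6 C_0'' \varepsilon/\log(1/\varepsilon) < \gamma/3$, and imposing the smallness condition $\delta \leq \mathrm{ht}(x_0)^{-3E\kappa_2/\gamma}$ so that $\mathrm{ht}(x_0)^{E\kappa_2} \leq \delta^{-\gamma/3}$, the right-hand side collapses to $\mathrm{ht}(x)^{E_4}\delta^{\dim G - \gamma}$ with $E_4 := E\kappa_2$.

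The main obstacle is the lattice point count: because the right-invariant metric is strongly distorted by conjugation by elements of large norm, the fibre $\pi^{-1}(B_\delta^X(x))$ can intersect many lattice translates when $x$ or $x_0$ is deep in a cusp, forcing a cusp-dependent prefactor. The saving grace is that $n\varepsilon \to 0$ as $\varepsilon \to 0$, so the exponential factor $e^{O(n\varepsilon)}$ costs only $\delta^{-o(1)}$ which is absorbed by the flattening gap; the height of $x$ is then absorbed into the prefactor $\mathrm{ht}(x)^{E_4}$, while the height of $x_0$ is absorbed into the smallness condition on $\delta$ (which is allowed to depend on $x_0$).
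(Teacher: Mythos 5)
Your high-level strategy — flattening on $G$ via Proposition~\ref{SuperFlatteningLemma}, then transferring to $X$ by decomposing the preimage of $B^X_\delta(x)$ as $\bigcup_{\lambda\in\Lambda}B_\delta(h_x\lambda h_0^{-1})$ and bounding the number of contributing translates — matches the paper's. The gap is in how you count the lattice translates, and it is not a cosmetic difference: it introduces an $\operatorname{ht}(x_0)$ factor that the proposition, as stated, does not allow.

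You convert the geometric constraint $h_x\lambda h_0^{-1}\in B_{2n\varepsilon}$ into the algebraic constraint $\|\lambda\|\ll\|h_x\|\,\|h_0\|\,e^{O(n\varepsilon)}$ and then count lattice elements by the size of $\|\lambda\|$. This is a genuine loss of information: the submultiplicativity step throws away the fact that the $h_0^{-1}$ appears on the \emph{right}, where the right-invariance of $d$ makes it harmless. The paper's Lemma~\ref{LatticeCountingUpperBound} instead counts $|B_R\cap g\Lambda h|$ directly by packing: two distinct translates $g\lambda h$ and $g\lambda' h$ have distance $d(g\lambda h,g\lambda' h)=d(g\lambda,g\lambda')=d(g\lambda\lambda'^{-1}g^{-1},e)$ (the $h$ cancels by right-invariance), and by Lemma~\ref{SizeLeftTranslate} the conjugation by $g$ distorts $r_\Lambda$ by a factor controlled only by $\|g\|$. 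This gives $|B_R\cap g\Lambda h|\ll\|g\|^{O(1)}m_G(B_R)$ with \emph{no} dependence on $\|h\|$. Applied with $g=h_x$, $h=h_0^{-1}$, this eliminates the $\operatorname{ht}(x_0)$ factor at the source.

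Your workaround — making $\delta$ small in terms of $\operatorname{ht}(x_0)$ — is not an allowable reading of "$\delta$ small enough." In the proof of Theorem~\ref{HighDimensiontoQuantitativeEquidistribution0} the proposition is invoked with the choice $\delta = e^{-2E_6R(\mu)n}$, which is determined by $n$ alone and must serve simultaneously for every $x_0\in X$; if the threshold on $\delta$ depended on $\operatorname{ht}(x_0)$, the resulting implied constant in Theorem~\ref{MainTheorem} would depend on $x_0$, which the statement of that theorem does not permit. (One could in principle tolerate an $\operatorname{ht}(x_0)^{O(1)}$ factor being carried into Theorem~\ref{MainTheorem}, but that proves a weaker statement than the one at hand.) The fix is straightforward: replace the count of $\{\lambda:\|\lambda\|\leq M\}$ by a direct packing of $B_{2n\varepsilon}\cap h_x\Lambda h_0^{-1}$ using the separation $\asymp\|h_x\|^{-1}$ coming from right-invariance, exactly as in Lemma~\ref{LatticeCountingUpperBound}. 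With that substitution your union-bound approach then goes through and gives an argument essentially equivalent to, and arguably slightly more elementary than, the paper's integral estimate in the proof of Proposition~\ref{LeftRightTranslate}.
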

	
	Proposition~\ref{HighDimension} uses the strong flattening results of \cite{BoutonnetIoanaSalehiGolsefidy2017}. To introduce notation, denote $$P_{\delta} = \frac{1_{B_{\delta}}}{\Haarof{G}(B_{\delta})}$$ and observe that for any symmetric measure $\nu$ it holds that $(\nu*P_{\delta})(g) = \frac{\nu(B_{\delta}(g^{-1}))}{\Haarof{G}(B_{\delta})}$ for any $g \in G$. 
	
	\begin{proposition}(Flattening Lemma, follows from Corollary 4.2 of \cite{BoutonnetIoanaSalehiGolsefidy2017})\label{SuperFlatteningLemma}
		Let $c_1, c_2 > 0$. Then for every $\gamma > 0$ there is $\varepsilon''_0 = \varepsilon''_0(c_1,c_2, \gamma)  > 0$ and $C_0'' = C_0''(c_1,c_2, \gamma) > 0$ such that the following holds.
		
		If $\varepsilon \leq \varepsilon''_0$ and $\mu_D$ is a symmetric and $(c_1,c_2, \varepsilon)$-Diophantine probability measure on $G$, then for $\delta > 0$ small enough, 
		\begin{equation}\label{LinftySuperFlattening}
			||\mu_D^{*n} * P_{\delta}||_{\infty} \leq \delta^{- \gamma} \quad\quad \text{ for any integer } \quad\quad n \geq C_0'' \frac{\log \frac{1}{\delta}}{\log \frac{1}{\varepsilon}}.
		\end{equation}
	\end{proposition}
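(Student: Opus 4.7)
The plan is to deduce the claim directly from Corollary~4.2 of \cite{BoutonnetIoanaSalehiGolsefidy2017}. First I would verify that the $(c_1,c_2,\varepsilon)$-Diophantine hypothesis of the present paper---combining both clauses (i) and (ii) of the definition---matches precisely the input hypothesis of their corollary. The notion of $(c_1,c_2,\varepsilon)$-Diophantine measure was introduced in \cite{Kogler2022} for exactly this purpose: to package the Diophantine assumptions of \cite{BoutonnetIoanaSalehiGolsefidy2017} in a convenient and reusable form. Once this matching is checked, the constants $\varepsilon_0''(c_1,c_2,\gamma)$ and $C_0''(c_1,c_2,\gamma)$ claimed in the proposition are obtained by direct transcription from their result.

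If the cited corollary is stated in $L^2$ rather than the $L^\infty$ form required here---i.e.\ provides $\|\mu_D^{*n} * P_\delta\|_2 \leq \delta^{-\gamma_1}$ for $n \geq C_1(c_1,c_2,\gamma_1)\log(1/\delta)/\log(1/\varepsilon)$ and $\varepsilon \leq \varepsilon_1(c_1,c_2,\gamma_1)$---then one passes to $L^\infty$ by a standard convolution-squaring argument. Setting $f_n := \mu_D^{*n} * P_\delta$ and $f_n^\vee(g) := f_n(g^{-1})$, Cauchy--Schwarz yields $\|f_n * f_n^\vee\|_\infty \leq \|f_n\|_2^2 \leq \delta^{-2\gamma_1}$. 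Using the right-invariance of the metric on $G$ (so that $B_\delta(e)$, hence $P_\delta$, is invariant under $g\mapsto g^{-1}$) together with the symmetry of $\mu_D^{*n}$, one expresses $f_n * f_n^\vee$ as $\mu_D^{*n} * P_\delta * P_\delta * \mu_D^{*n}$, and transfers the $L^\infty$-bound to the quantity $\|\mu_D^{*2n} * P_{C\delta}\|_\infty$ at a nearby scale. Setting $\gamma_1 := \gamma/2$, doubling $n$, and absorbing the rescaling $\delta \mapsto C\delta$ into the exponent of $\delta$ for $\delta$ small then yields the claim with $C_0'' = 2C_1$ and $\varepsilon_0'' = \varepsilon_1$.

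The main difficulty is purely bookkeeping: tracking the parametric dependence of $\varepsilon_0''$ and $C_0''$ on $(c_1,c_2,\gamma)$ through the halving of $\gamma_1$, the doubling of the convolution power, and the rescaling of $\delta$, and checking that the absolute constants arising in the $L^2 \to L^\infty$ step are absorbed into $\delta^{-\gamma}$ for $\delta$ small enough. The conceptual content sits entirely in \cite{BoutonnetIoanaSalehiGolsefidy2017}, and this proposition serves to repackage their output in the $L^\infty$ form most convenient for the application in Proposition~\ref{HighDimension}.
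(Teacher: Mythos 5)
Your proposal matches the paper's proof essentially: both cite Corollary 4.2 of \cite{BoutonnetIoanaSalehiGolsefidy2017} for the $L^2$ bound on $\mu_D^{*n}*P_\delta$ and then upgrade to $L^\infty$ by convolution-squaring plus Cauchy--Schwarz, using the symmetry of $\mu_D^{*n}$ and the invariance of $P_\delta$ under $g \mapsto g^{-1}$, followed by a monotonicity argument under further convolution with probability measures. The only cosmetic difference is that you track $\|f_n * f_n^\vee\|_\infty = \|\mu_D^{*n} * P_\delta * P_\delta * \mu_D^{*n}\|_\infty$, whereas the paper tracks $\|f_n*f_n\|_\infty = \|\mu_D^{*n} * P_\delta * \mu_D^{*n} * P_\delta\|_\infty$ after first invoking the pointwise bound $P_\delta \ll P_\delta * P_\delta$; both variants lead to the same $\|f_n\|_2^2$ bound and carry the same (mild) bookkeeping when identifying the resulting expression with $\|\mu_D^{*2n}*P_\delta\|_\infty$.
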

	
	\begin{proof}
		By Corollary 4.2 of \cite{BoutonnetIoanaSalehiGolsefidy2017}, there exist $\varepsilon_1 = \varepsilon_1(c_1,c_2, \gamma)  > 0$ and $D_0 = D_0(c_1,c_2, \gamma) > 0$ such that 
		\begin{equation}\label{L2SuperFlattening}
			||\mu_D^{*n} * P_{\delta}||_{2} \leq \delta^{- \gamma} \quad\quad \text{ for any integer } \quad\quad n \geq D_0 \frac{\log \frac{1}{\delta}}{\log \frac{1}{\varepsilon}}.
		\end{equation} for $\delta > 0$ small enough provided that $0 < \varepsilon \leq \varepsilon_1$ and $\mu_D$ is a symmetric and $(c_1,c_2, \varepsilon)$-Diophantine probability measure on $G$. 
		
		We now deduce the claimed $L^{\infty}$-estimate for $\mu_D^{*n} * P_{\delta}$. To establish the latter we notice that as in Lemma 2.5 of \cite{BourgainGamburd2008Invent} one shows that $$ P_{\delta} \ll P_{\delta} * P_{\delta} \ll P_{2\delta},$$ for absolute implied constants depending on $G$. Notice that as $P_{\delta}(g) = P_{\delta}(g^{-1})$ for all $g \in G$ and, as $\mu_D$ is symmetric, it follows that $(\mu_D^{*n}*P_{\delta})(g) = (P_{\delta} * \mu_D^{*n})(g^{-1})$.
		Therefore, $$||\mu_D^{*2n}*P_{\delta}||_{\infty} \ll ||\mu_D^{*2n} * P_{\delta} *P_{\delta}||_{\infty} = ||\mu_D^{*n} * P_{\delta} * \mu_D^{*n} * P_{\delta}||_{\infty} \leq ||\mu_D^{*n}*P_{\delta}||_2^2 \leq \delta^{-2\gamma},$$ having applied Cauchy-Schwartz in the penultimate inequality. Replacing $\gamma$ by $\gamma/4$, and choosing $\eps_0''$ and $C_0''$ suitably in terms of $\eps_1$ and $D_0$, the claim follows using that for any two probability measures $\nu_1, \nu_2$ on $G$ it holds that $||\nu_1 * \nu_2 * P_{\delta}||_{\infty} \leq ||\nu_2 *P_{\delta}||_{\infty}$. 
	\end{proof}

	Notice that \eqref{LinftySuperFlattening} implies that $\mu_D^{*n}(B_{\delta}(g)) \ll  \delta^{\dim G - \gamma}$ for any $g \in G$ and $\delta$ sufficiently small. The content of Proposition~\ref{HighDimension} is therefore to show the same conclusion on $X$. This is achieved by exploiting that $\mu_D$ is supported close to the identity. 
	
	We proceed with a few preliminary lemmas.
	
	\begin{lemma}\label{SizeLeftTranslate}
		Let $g \in G$ and $\delta > 0$ be small enough (in terms of $G$). Then for $y \in G$ with $||y|| \ll \delta^{-1}$, $$yB_{\delta}(g) \subset B_{2||y||\delta}(yg), \qquad B_{\delta}(yg) \subset yB_{2||y||\delta}(g).$$
	\end{lemma}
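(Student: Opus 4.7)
The plan is to reduce both inclusions to a single bound on the effect of conjugation by $y$ on a small identity neighbourhood. Since $d$ is right-invariant, $B_{\delta}(g) = B_{\delta}\cdot g$ and $B_{2\|y\|\delta}(yg) = B_{2\|y\|\delta}\cdot yg$, so the first inclusion $yB_{\delta}(g) \subset B_{2\|y\|\delta}(yg)$ is equivalent to
$$yB_{\delta}y^{-1} \subset B_{2\|y\|\delta}.$$
The second inclusion reduces analogously to $y^{-1}B_{\delta}y \subset B_{2\|y\|\delta}$, which is the same statement with $y$ replaced by $y^{-1}$; since $\|y^{-1}\| = \|y\|$ by \eqref{NormProperties}, both inclusions follow from the displayed conjugation bound applied in turn to $y$ and to $y^{-1}$.

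To prove the conjugation bound I would pass to exponential coordinates. Because $\exp:\mathfrak{g}\to G$ is a local diffeomorphism at $0$ and the fixed euclidean norm on $\mathfrak{g}$ is equivalent to the length norm induced by the right-invariant metric $d$ at the identity, there exist constants $C, \delta_1>0$ such that, for all $\|X\| \leq C$, the element $\exp X$ is defined with $d(\exp X, e) \leq C\|X\|$, and conversely every $h \in B_{\delta_1}$ can be written as $h = \exp X$ with $\|X\| \leq C\,d(h,e)$. Given $h \in B_{\delta}$ with $\delta \leq \delta_1$, I would write $h = \exp X$ with $\|X\| \leq C\delta$ and use the conjugation identity
$$yhy^{-1} = \exp(\operatorname{Ad}(y)X), \qquad \|\operatorname{Ad}(y)X\| \leq \|\operatorname{Ad}(y)\|_{\operatorname{op}}\|X\| \ll \|y\|\delta,$$
the last bound by \eqref{NormProperties}. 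Because the hypothesis $\|y\| \ll \delta^{-1}$ forces $\|y\|\delta$ to be bounded by an absolute constant, the vector $\operatorname{Ad}(y)X$ still lies in the region $\{Z\in\mathfrak{g} : \|Z\| \leq C\}$ (after possibly shrinking $\delta_1$), so $d(yhy^{-1},e) \leq C\|\operatorname{Ad}(y)X\| \ll \|y\|\delta$.

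The only subtle point is the numerical constant $2$ in the statement; it reflects no intrinsic feature of the argument but is obtained by taking the implicit constants in $\|y\| \ll \delta^{-1}$ and the smallness threshold on $\delta$ sufficiently small so as to absorb the Lipschitz constant of $\exp$ near the origin together with the implicit constant from $\|\operatorname{Ad}(y)\|_{\operatorname{op}} \ll \|y\|$. No substantial obstacle arises; the whole argument rests on the bi-Lipschitz equivalence between $d$ and the fixed Lie algebra norm on a small neighbourhood of the identity, combined with the adjoint identity $y\exp(X)y^{-1} = \exp(\operatorname{Ad}(y)X)$.
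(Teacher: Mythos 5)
Your proof is correct and follows essentially the same route as the paper's: reduce to $g=e$ using right-invariance, pass to exponential coordinates where $d$ is comparable to the Lie algebra norm, apply $y\exp(X)y^{-1}=\exp(\operatorname{Ad}(y)X)$ with the operator norm bound $\|\operatorname{Ad}(y)\|_{\operatorname{op}}\ll\|y\|$, and use $\|y\|\ll\delta^{-1}$ to stay inside the chart; the second inclusion is deduced from the first by switching $y$ to $y^{-1}$, exactly as in the paper. Your remark on the harmless fudge in the constant $2$ is a fair reading of what the paper leaves implicit.
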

	
	\begin{proof}
		As $B_R(g) = B_R \cdot g$, it suffices to prove the claims for $g = e$. For the first claim, choose $c > 0$ sufficiently small such that for every element $g \in B_c \subset G$ the exponential map has a unique preimage and $\frac{1}{2}||\exp^{-1}(g)|| \leq d(g,e) \leq 2||\exp^{-1}(g)||.$ Choose $\delta < c$ and let  $h \in B_{\delta}$. Then there is a unique $X \in \mathfrak{g}$ with $||X|| \leq 2\delta$ such that $h = \exp(X)$ and moreover for any $y \in G$, it holds that $yhy^{-1} = \exp(\Ad(y)X).$ As $||\Ad(y)X|| \leq ||\Ad(y)||_{\mathrm{op}}||X|| \ll ||y|| \, ||X||$, it therefore holds for $||y|| \ll \delta^{-1}$ that $||\Ad(y)X|| < c$ and hence $d(yh,y) = d(yhy^{-1},e) \leq 2||\Ad(y)X|| \leq 2||y|| \delta$, showing the first claim. The second claim follows form the first since $y^{-1}B_{\delta}(yg) \subset B_{2 ||y||\delta}(y^{-1}y g) = B_{2 ||y||\delta}(g)$.
	\end{proof}
	
	\begin{lemma}\label{LatticeCountingUpperBound}
		Let $\Lambda < G$ be a lattice. Then for any $g,h\in G$, $$|B_R \cap g\Lambda h| \ll_{\Lambda} \|g\|^{O(1)}\Haarof{G}(B_R).$$
	\end{lemma}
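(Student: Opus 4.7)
The plan is a standard packing argument: around each point $p\in B_R\cap g\Lambda h$ I place a small Haar ball $B_\eta\cdot p$ of radius $\eta\asymp_\Lambda \|g\|^{-1}$, show these balls are pairwise disjoint and lie inside $B_{R+\eta}$, and then divide volumes.

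First, fix $\eta_0>0$ with $B_{\eta_0}\cap\Lambda=\{e\}$ (possible as $\Lambda$ is discrete) and set $\eta=\eta_0/(C\|g\|)$ for a constant $C=C(G)$ to be chosen. The main step is to prove disjointness of the balls $B_\eta(p)=B_\eta\cdot p$. Should two such balls around $p=g\lambda h$ and $p'=g\lambda'h$ intersect, by right-invariance of the metric $p(p')^{-1}=g\lambda\lambda'^{-1}g^{-1}\in B_{2\eta}$, hence $\lambda\lambda'^{-1}\in g^{-1}B_{2\eta}g$. Applying Lemma~\ref{SizeLeftTranslate} to $y=g^{-1}$, combined with the identity $B_r(x)\cdot g=B_r(xg)$ coming from right-invariance, yields $g^{-1}B_{2\eta}g\subseteq B_{4\|g\|\eta}$. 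For $C\ge 4/\eta_0$ this last ball is contained in $B_{\eta_0}$, so it meets $\Lambda$ only at the identity, forcing $\lambda=\lambda'$.

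Since $G$ is unimodular, each $B_\eta(p)$ has Haar measure $\Haarof{G}(B_\eta)$, and the union of these disjoint balls is contained in $B_{R+\eta}$. Together with the lower bound $\Haarof{G}(B_\eta)\gg \eta^{\dim G}\asymp_\Lambda \|g\|^{-\dim G}$ (valid since $\eta$ is bounded by a constant depending only on $\Lambda$), this yields
$$|B_R\cap g\Lambda h|\ll_\Lambda \|g\|^{\dim G}\,\Haarof{G}(B_{R+\eta}).$$

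The remaining and main technical point is to replace $\Haarof{G}(B_{R+\eta})$ by a constant multiple of $\Haarof{G}(B_R)$. Since $\eta\le \eta_0/C$ is bounded, for $R$ bounded away from zero this is a volume-doubling estimate on a Lie group with right-invariant Riemannian metric, following from the growth bound $\Haarof{G}(B_r)\ll e^{E_5 r}$ combined with a matching lower bound. When $R\le \eta$ I instead re-run the packing with the smaller radius $R/2$ in place of $\eta$ (which still lies below the injectivity radius bound), and then $\Haarof{G}(B_{3R/2})/\Haarof{G}(B_{R/2})\ll_G 1$ by polynomial scaling in exponential coordinates. Combining these cases yields $|B_R\cap g\Lambda h|\ll_\Lambda \|g\|^{O(1)}\Haarof{G}(B_R)$.
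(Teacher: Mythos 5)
Your packing argument is essentially the same as the paper's: around each point $p=g\lambda h\in B_R$ you place a ball of radius $\eta\asymp_\Lambda\|g\|^{-1}$, use Lemma~\ref{SizeLeftTranslate} (with $y=g^{-1}$) to see the balls are pairwise disjoint, and divide volumes. In fact you handle one step more carefully than the paper does: the disjoint balls sit inside $B_{R+\eta}$, not inside $B_R$, and you explicitly flag and address the passage from $\Haarof{G}(B_{R+\eta})$ to $\Haarof{G}(B_R)$. The paper's proof glides past this by writing the packing bound as an equality
$|B_R\cap g\Lambda h|=\Haarof{G}\bigl(B_R\cap B_c(g\Lambda h)\bigr)/\Haarof{G}(B_c)$,
which is not literally an equality — balls straddling $\partial B_R$ break it in both directions — and the clean inequality one actually gets is yours, $|B_R\cap g\Lambda h|\,\Haarof{G}(B_\eta)\le\Haarof{G}(B_{R+\eta})$.

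On the small-$R$ case you spot the right obstacle but the fix does not quite close. Re-running the packing with radius $R/2$ gives $|B_R\cap g\Lambda h|\le\Haarof{G}(B_{3R/2})/\Haarof{G}(B_{R/2})\ll_G 1$, but that is only $\ll\|g\|^{O(1)}\Haarof{G}(B_R)$ when $\|g\|^{O(1)}\Haarof{G}(B_R)\gg 1$, which fails when both $R$ and $\|g\|$ are small: take $g=h=e$ and $R\to 0$, so $|B_R\cap\Lambda|\ge 1$ while $\|g\|^{O(1)}\Haarof{G}(B_R)\to 0$. This is a genuine (if benign) issue with the lemma as stated, and it afflicts the paper's proof as well; in the paper's sole application (Proposition~\ref{LeftRightTranslate}) the radius is $R=2n\varepsilon$ and can be taken bounded below, so the defect is harmless there. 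One clean way to make your write-up rigorous is to split on $R\ge\eta$ versus $R<\eta$: for $R\ge\eta$ your volume ratio $\Haarof{G}(B_{R+\eta})/\Haarof{G}(B_R)\ll_G 1$ is correct (with a short justification from two-sided exponential growth of balls in a simple Lie group), and for $R<\eta$ one should simply state the conclusion as $|B_R\cap g\Lambda h|\ll_G 1$ and note that this suffices because the lemma is only invoked with $R$ bounded away from $0$, or add a $\max(1,\Haarof{G}(B_R))$ on the right-hand side.
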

	
	\begin{proof}
		As a lattice is discrete, by Lemma~\ref{SizeLeftTranslate} we may choose $c \asymp_{\Lambda} \|g\|^{-1}$ such that for every $\lambda \in \Lambda$ it holds that $B_c(g\lambda h) \cap g\Lambda h= \{ g\lambda h\}$. Therefore, $$|B_R \cap g\Lambda h| = \frac{\Haarof{G}(B_R \cap B_c(g\Lambda h))}{\Haarof{G}(B_c)} \ll_{\Lambda} \|g\|^{O(1)}\Haarof{G}(B_R).$$
	\end{proof}
	
	\begin{proposition}(High Dimension of Left and Right Translates of Lattice Neighbourhood)\label{LeftRightTranslate}
		Let $\Lambda$ be a lattice in $G$. Let $c_1, c_2, \gamma > 0$. Then there are $\varepsilon'_0 = \varepsilon'_0(c_1, c_2,\gamma)$ and $C_0' = C_0'( c_1,c_2,\gamma)>0$ such that the following holds.
		
		Let $\mu_D$ be a $(c_1, c_2, \varepsilon)$-Diophantine measure for $0<\varepsilon \leq \varepsilon'_0$. Then for $\delta$ small enough, it holds for all $x,y \in G$,
		$$\mu_D^{*n}(B_{\delta}(y\Lambda x)) \ll_{\Lambda} \|y\|^{O(1)}\delta^{\dim G - \gamma}$$ for $n \asymp C_0' \frac{\log \frac{1}{\delta}}{\log \frac{1}{\varepsilon}}$.
	\end{proposition}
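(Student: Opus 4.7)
The plan is to reduce the estimate on $\mu_D^{*n}(B_\delta(y\Lambda x))$ to a union bound over those lattice translates $y\lambda x$ that actually meet a neighborhood of the support of $\mu_D^{*n}$, applying Proposition~\ref{SuperFlatteningLemma} to each individual ball and using Lemma~\ref{LatticeCountingUpperBound} to count the relevant translates. To absorb the exponential volume cost coming from the growing support of $\mu_D^{*n}$, I would apply flattening with parameter $\gamma/2$ rather than $\gamma$ and then shrink $\varepsilon_0'$ to trade the remaining $\gamma/2$ for this volume growth.

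First I apply Proposition~\ref{SuperFlatteningLemma} with $\gamma$ replaced by $\gamma/2$, obtaining constants $\varepsilon_0''$ and $C_0''$ such that for every symmetric $(c_1,c_2,\varepsilon)$-Diophantine $\mu_D$ with $\varepsilon\leq\varepsilon_0''$ and every $n\geq C_0''\log(\delta^{-1})/\log(\varepsilon^{-1})$ one has $\|\mu_D^{*n}*P_\delta\|_\infty\leq\delta^{-\gamma/2}$. Combined with the identity $(\mu_D^{*n}*P_\delta)(g)=\mu_D^{*n}(B_\delta(g^{-1}))/\Haarof{G}(B_\delta)$ and $\Haarof{G}(B_\delta)\asymp\delta^{\dim G}$ for small $\delta$, this yields the uniform pointwise bound $\mu_D^{*n}(B_\delta(h))\ll\delta^{\dim G-\gamma/2}$ for every $h\in G$.

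Next I would use $\operatorname{supp}(\mu_D)\subset B_\varepsilon(e)$ together with the right-invariance of $d$ (so that $B_\varepsilon\cdot B_\varepsilon\subset B_{2\varepsilon}$) to deduce $\operatorname{supp}(\mu_D^{*n})\subset B_{n\varepsilon}$. Hence only those $\lambda\in\Lambda$ with $B_\delta(y\lambda x)\cap B_{n\varepsilon}\neq\emptyset$ can contribute to the measure, and such $\lambda$ satisfy $y\lambda x\in B_{\delta+n\varepsilon}$. Lemma~\ref{LatticeCountingUpperBound} applied with $g=y$, $h=x$, $R=\delta+n\varepsilon$ bounds the number of such $\lambda$ by
$$|\{\lambda\in\Lambda\,:\,y\lambda x\in B_{\delta+n\varepsilon}\}|\ll_\Lambda\|y\|^{O(1)}\Haarof{G}(B_{\delta+n\varepsilon})\ll\|y\|^{O(1)}e^{E_5(\delta+n\varepsilon)}.$$
A union bound over these $\lambda$, combined with the per-ball flattening estimate, then produces
$$\mu_D^{*n}(B_\delta(y\Lambda x))\ll_\Lambda\|y\|^{O(1)}e^{E_5 n\varepsilon}\delta^{\dim G-\gamma/2}.$$

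Finally, setting $C_0':=C_0''$ and substituting $n\asymp C_0'\log(\delta^{-1})/\log(\varepsilon^{-1})$ yields the prefactor estimate $e^{E_5 n\varepsilon}\leq\delta^{-E_5 C_0'\varepsilon/\log(\varepsilon^{-1})}$. Since $\varepsilon/\log(\varepsilon^{-1})\to 0$ as $\varepsilon\to 0^+$, I can pick $\varepsilon_0'\leq\varepsilon_0''$ small enough that $E_5 C_0'\varepsilon/\log(\varepsilon^{-1})\leq\gamma/2$ whenever $\varepsilon\leq\varepsilon_0'$; this makes the prefactor at most $\delta^{-\gamma/2}$, and combining with the flattening loss of $\delta^{-\gamma/2}$ gives the claimed bound $\ll_\Lambda\|y\|^{O(1)}\delta^{\dim G-\gamma}$. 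The main obstacle is precisely this balancing: the support $B_{n\varepsilon}$ expands as $\delta\to 0$, so the lattice-point count grows exponentially in $n\varepsilon$, and we must shrink $\varepsilon$ in terms of $\gamma$ (and $C_0''$) to ensure this growth is dwarfed by the flattening savings. This is exactly why $\varepsilon_0'$ in the statement is allowed to depend on $\gamma$, in line with the remark preceding Proposition~\ref{HighDimension} that one can only reach dimension close to (but not arbitrarily close to) $\dim G$ for a fixed Diophantine measure.
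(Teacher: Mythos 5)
Your proposal is correct and follows essentially the same approach as the paper's own proof: apply Proposition~\ref{SuperFlatteningLemma} at parameter $\gamma/2$ to get a uniform per-ball bound, restrict attention to lattice translates in $B_{O(n\varepsilon)}$ because $\operatorname{supp}(\mu_D^{*n})\subset B_{n\varepsilon}$, count those translates via Lemma~\ref{LatticeCountingUpperBound}, and shrink $\varepsilon_0'$ so that the exponential volume of $B_{n\varepsilon}$ is absorbed by the remaining $\delta^{-\gamma/2}$ margin. The only cosmetic difference is that the paper replaces your union bound over lattice translates with a Fubini/integral-inequality argument ($\mu_D^{*n}(B_\delta(y\Lambda x))\leq\int 1_{B_{2\delta}(y\Lambda x)}(g)\,(\mu_D^{*n}*P_\delta)(g^{-1})\,dm_G(g)$), but this leads to the same final estimate $\ll_\Lambda\|y\|^{O(1)}m_G(B_{O(n\varepsilon)})\,\delta^{\dim G-\gamma/2}$ and the same choice of constants.
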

	
	\begin{proof}
		To prove the claim we first show for all $x,y \in G$ that $$\mu_D^{*n}(B_{\delta}(y\Lambda x)) \leq \int 1_{B_{2\delta}(y\Lambda x)}(g) (\mu_D^{*n} * P_{\delta})(g^{-1}) \, d\Haarof{G}(g).$$ Indeed, using that $(\mu_D^{*n} * P_{\delta})(g) = \frac{\mu_D^{*n}(B_{\delta}(g^{-1}))}{\Haarof{G}(B_{\delta})}$ it suffices to show that $$\Haarof{G}(B_{\delta}) \mu_D^{*n}(B_{\delta}(y\Lambda x)) \leq \int 1_{B_{2\delta}(y\Lambda x)}(g) \mu_D^{*n}(B_{\delta}(g)) \, d\Haarof{G}(g).$$  We compute,
		\begin{align*}
			\Haarof{G}(B_{\delta}) \mu_D^{*n}(B_{\delta}(y\Lambda x)) &= \int \Haarof{G}(B_{\delta}(h)) 1_{B_{\delta}(y\Lambda x)}(h) \, d\mu_D^{*n}(h) \\
			&= \int\int 1_{B_{\delta}(h)}(g) 1_{B_{\delta}(y\Lambda x)}(h) \, d\Haarof{G}(g) d\mu_D^{*n}(h) \\
			&= \int\int 1_{B_{\delta}(g)}(h) 1_{B_{\delta}(y\Lambda x)}(h) \,  d\mu_D^{*n}(h) d\Haarof{G}(g) \\
			&\leq \int\int 1_{B_{\delta}(g)}(h) 1_{B_{2\delta}(y\Lambda x)}(g) \,  d\mu_D^{*n}(h) d\Haarof{G}(g) \\
			&= \int 1_{B_{2\delta}(y\Lambda x)}(g) \mu_D^{*n}(B_{\delta}(g)) \, d\Haarof{G}(g),
		\end{align*} using Fubini's Theorem and in the penultimate line that for fixed $g$, it holds that $1_{B_{\delta}(g)}(h) 1_{B_{\delta}(y\Lambda x)}(h)  \leq 1_{B_{\delta}(g)}(h) 1_{B_{2\delta}(y\Lambda x)}(g)$ as for $h \in B_{\delta}(g)\cap B_{\delta}(y\Lambda x)$ it holds that $g \in B_{2\delta}(y\Lambda x)$.
		
		Denote by $\varepsilon''_0(c_1,c_2, \frac{\gamma}{2})$ and $ C_0''(c_1,c_2,\frac{\gamma}{2}) > 0$ the constants from Proposition~\ref{SuperFlatteningLemma}. Since $G$ is simple, there exists a constant $E_5>0$ only depending on $G$ such that $m_G(B_r)\ll e^{E_5r}$ for all $r>0$. Let $C_0' = C_0'(c_1,c_2,\gamma) := C_0''(c_1,c_2,\frac{\gamma}{2})$ and $\varepsilon'_0(c_1,c_2,\gamma):=\min\big(\varepsilon''_0(c_1,c_2,\frac{\gamma}{2}),\frac{\gamma}{4C_0''E_5},e^{-1}\big)$. Employing Proposition~\ref{SuperFlatteningLemma} for $0<\varepsilon\leq\varepsilon'_0$, $\delta$ small enough and $n \asymp C_0' \frac{\log \frac{1}{\delta}}{\log \frac{1}{\varepsilon}}$, we estimate 
		\begin{align*}
			\mu_D^{*n}(B_{\delta}(y\Lambda x)) & \leq \int 1_{B_{2\delta}(y\Lambda x)}(g)  (\mu_D^{*n} * P_{\delta})(g^{-1}) \, d\Haarof{G}(g) \\ &\leq \delta^{-\frac{\gamma}{2}} \int_{B_{2n\varepsilon }} 1_{B_{2\delta}(y\Lambda x)} \, d\Haarof{G}(g) \\
			&= \delta^{-\frac{\gamma}{2}} \Haarof{G}(B_{2n\varepsilon} \cap B_{2\delta}(e)\cdot y\Lambda x).
		\end{align*}
		Therefore by Lemma \ref{LatticeCountingUpperBound},
		\begin{align*}
			\mu_D^{*n}(B_{\delta}(y\Lambda x)) &\leq  \delta^{-\frac{\gamma}{2}}\Haarof{G}(B_{2\delta}(e))|B_{2n\varepsilon} \cap y\Lambda x| \\
			&\ll_{\Lambda} \delta^{\dim G - \frac{\gamma}{2}} \|y\|^{O(1)} \Haarof{G}(B_{2n\varepsilon})\\
			&\ll_{\Lambda} \delta^{\dim G - \frac{\gamma}{2}} \|y\|^{O(1)}e^{2E_5n\varepsilon}.
		\end{align*} 
		For any $0<\varepsilon\leq\varepsilon'_0$, we have $\frac{2C_0'E_5 \varepsilon}{\log \frac{1}{\varepsilon}} \leq \frac{\gamma}{2}$, hence
		$$\mu_D^{*n}(B_{\delta}(y\Lambda x)) 
		\ll_{\Lambda}   \delta^{\dim G - \frac{\gamma}{2}} \|y\|^{O(1)} \delta^{-\frac{2C_0'E_5 \varepsilon}{\log \frac{1}{\varepsilon}}} 
		\ll_{\Lambda} \|y\|^{E_4} \delta^{\dim G - \gamma}$$ for $E_4 > 0$ an absolute constant depending only on $G$. 
	\end{proof}
	
	It is straightforward to deduce Proposition~\ref{HighDimension} from Proposition~\ref{LeftRightTranslate}
	
	\begin{proof}[Proof of Proposition~\ref{HighDimension}]
		By Proposition~\ref{heightftn} we may choose $g_0 \in G$ and $g_x \in G$ such that $x_0 = g_0 \Lambda$ and $x = g_x \Lambda$ with $\|g_x\|\ll\operatorname{ht}(x)^{\kappa_2}$. We notice that if $gx_0 \in B_{\delta}^X(x)$ then $gg_0 \in B_{\delta}(g_x \Lambda)$, or equivalently $g \in B_{\delta}(g_x\Lambda g_0^{-1})$. Therefore by Proposition~\ref{LeftRightTranslate}, for $n \asymp C_0'\frac{\log \frac{1}{\delta}}{\log \frac{1}{\eps}}$ and $\delta$ small enough, $$\nu_n(B_{\delta}^X(x)) \leq \mu_D^{*n}(B_{\delta}(g_x\Lambda g_0^{-1})) \ll_{\Lambda} \operatorname{ht}(x)^{O(1)}\delta^{\dim G - \gamma}.$$  Upon replacing $\gamma$ by $\gamma/2$ and choosing $\delta$ sufficiently small, we can remove the implied absolute constant in the last inequality and conclude the proof.
	\end{proof}
	
	\subsection{Proof of Theorem \ref{MainTheorem}}\label{SectionCompletionMainTheorem}
	
	In this section we show how to deduce quantitative equidistribution under the assumption that $\nu_n$ has a high dimension.
	
	Recall that we say that $\mu$ has a spectral gap on $X$ if $$\mathrm{gap}(\mu) = - \log \rho\big(\pi_X(\mu)|_{L_0^2(X)}\big) > 0$$ and notice that for $f_1, f_2 \in L^2(X)$ and $0 < c < \mathrm{gap}(\mu) $, 
	\begin{equation}\label{exponentialmixing}
		|\langle \pi_X(\mu)^n f_1, f_2 \rangle - \langle f_1,1 \rangle \langle 1, f_2 \rangle | \leq e^{-c\cdot n}\|f_1\|_2 \|f_2\|_2
	\end{equation} for sufficiently large $n$. We use in this section that $\mathrm{gap}(\mu) > 0$ for a large class of measures.
	
	\begin{lemma}(follows from Theorem C of \cite{Shalom2000})\label{muspectralgap}
		Let $G$ be a non-compact connected simple Lie group with finite center, let $\Lambda < G$ be a lattice, and denote by $\pi_X$ the Koopman representation on $X = G/\Lambda$. Let $\mu$ be a probability measure that is not supported on a closed amenable subgroup. Then $\mu$ has a spectral gap on $X$. 
	\end{lemma}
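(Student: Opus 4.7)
The plan is to invoke Theorem~C of \cite{Shalom2000} applied to the Koopman representation $\pi_X$ restricted to $L^2_0(X)$. That theorem provides a Kazhdan-type spectral gap for unitary representations of a locally compact group whenever the driving probability measure is not concentrated on a closed amenable subgroup, so the entire argument reduces to verifying Shalom's hypotheses in our setting.

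First, I would reduce to a self-adjoint situation by passing from $\mu$ to $\tilde\mu * \mu$, using that $\rho(\pi_X(\mu))^2 = \rho(\pi_X(\tilde\mu * \mu))$, and observing that the support of $\tilde\mu * \mu$ is also not contained in any closed amenable subgroup (otherwise, translating by an element of $\operatorname{supp}(\mu)$ would place $\operatorname{supp}(\mu)$ inside a coset of that amenable subgroup, contradicting the hypothesis up to conjugation). Then I would let $H = \overline{\langle \operatorname{supp}(\mu) \rangle}$ denote the closed subgroup generated by $\operatorname{supp}(\mu)$; by hypothesis $H$ is non-amenable, and hence in particular non-compact.

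Next, since $G$ is connected simple non-compact with finite center and $\Lambda$ is a lattice, Moore's ergodicity theorem implies that every non-compact closed subgroup of $G$ acts ergodically on $X$, so $L^2_0(X)$ carries no non-zero $H$-invariant vectors; in addition, the Howe--Moore theorem ensures that all matrix coefficients of $\pi_X|_{L^2_0(X)}$ vanish at infinity on $G$. Applying Theorem~C of \cite{Shalom2000} to the restricted representation $\pi_X|_H$, the non-amenability of $H$ combined with the absence of $H$-invariant vectors then yields a strict bound $||\pi_X(\mu)|_{L^2_0(X)}|| \leq 1 - \delta$ for some $\delta > 0$, and hence $\mathrm{gap}(\mu) > 0$.

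The step I expect to be the main obstacle is the passage from ``$H$ non-amenable and $L^2_0(X)$ carries no $H$-invariant vectors'' to the quantitative operator-norm bound: excluding only invariant vectors is not sufficient in general, since almost invariant vectors could push the spectral radius back up to one, and it is precisely the role of Shalom's theorem to exploit the non-amenability of $H$ in order to preclude such almost invariant vectors and thereby convert the qualitative ergodicity statement into an honest spectral gap.
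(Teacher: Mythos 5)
There is a genuine gap at exactly the point you flag at the end, and it is not one that Shalom's theorem can close for you. Theorem~C of \cite{Shalom2000} takes as a \emph{hypothesis} that the representation does not weakly contain the trivial representation (equivalently, has no almost-invariant vectors); it does not derive this from non-amenability of the support subgroup together with the mere absence of invariant vectors. Moore ergodicity gives you ``no $H$-invariant vectors'' and Howe--Moore gives you ``all matrix coefficients of $\pi_X|_{L^2_0(X)}$ vanish at infinity,'' but mixing does not rule out a sequence of almost-invariant unit vectors $\xi_n$ whose rate of decay degenerates as $n\to\infty$. There are mixing representations of $\mathrm{SL}_2(\R)$ without invariant vectors that nevertheless weakly contain $1_G$, so the implication you want is simply false at this level of generality. (Your reduction to $\tilde\mu*\mu$ is harmless but also unnecessary: the paper applies Theorem~C directly to $\mu$ and to the full group $G$ rather than to $H$.)

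What the paper uses instead is a strictly stronger, quantitative input: for $X=G/\Lambda$ with $G$ semisimple and $\Lambda$ a lattice, there exists $m\geq 1$ such that $\pi_X|_{L^2_0(X)}^{\otimes m}\prec\lambda_G$ (integrability of $K$-finite matrix coefficients up to a tensor power, not just Howe--Moore vanishing). From this, if $1_G\prec\pi_X|_{L^2_0(X)}$ then $1_G=1_G^{\otimes m}\prec\lambda_G$, which contradicts non-amenability of $G$; that is how the non-weak-containment hypothesis of Theorem~C is verified. So your framework is right, but the ``ergodicity $+$ Howe--Moore $\Rightarrow$ no almost-invariant vectors'' step must be replaced by this temperedness-up-to-tensor-power property of the Koopman representation on $G/\Lambda$.
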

	
	\begin{proof}
		To apply Theorem C of \cite{Shalom2000} it suffices to show that the trivial representation $1_G$ is not weakly contained in $\pi_X|_{L_0^2(X)}$. Recall that there is $m \geq 1$ such that $\pi_X|_{L_0^2(X)}^{\otimes m}$ is weakly contained in the left regular representation $\lambda_G$. If $1_G \prec \pi_X|_{L_0^2(X)}$, it would therefore follow that $1_G = 1_G^{\otimes m} \prec \lambda_G$, which is a contradiction since $G$ is non-ameanable. 
	\end{proof}
	
	For a compactly supported probability measure $\mu$ on $G$, write
	$$R(\mu):=\min\{R>0: \operatorname{supp}(\mu)\subseteq B_R\}.$$ Denote by $E_6 > 0$ the constant depending only $G$ such that $\|g\| \ll e^{E_6R}$ for all $g\in B_R$. As $\mathrm{supp}(\mu^{*n}) \subset B_{R(\mu)n}$ for $n \geq 1$, it follows that $\|g\|\leq e^{E_6R(\mu)n}$ for any $n\ge 1$ and $g\in \operatorname{supp}(\mu^{*n})$. 
	
	We are now in a suitable position to prove Theorem \ref{MainTheorem}. For convenience we restate Theorem \ref{MainTheorem} with the addition of the below bound on $\theta$.
	
	\begin{theorem}(Theorem \ref{MainTheorem})
		\label{HighDimensiontoQuantitativeEquidistribution0}
		Let $G$, $\Lambda$ and $X$ be as in Theorem~\ref{EffectiveDiameter}. Let $\mu$ be a compactly supported probability measure on $G$ with a spectral gap on $X$. 
		
		Then there are $\varepsilon_0 = \varepsilon_0(\mu,c_1,c_2) > 0$ and $\theta=\theta(\mu)$ such that for every $(c_1, c_2 , \varepsilon)$-Diophantine probability measure $\mu_D$ with $0<\varepsilon \leq \varepsilon_0$ the following holds: There exists $\beta = \beta(\mu,\eps)$ such that for every bounded Lipschitz function $f \in \mathrm{Lip}(X)$, $x_0 \in X$ and $n\geq 1$, $$\int f(g x_0) \, d(\mu^{*n}*\mu_D^{*\beta \cdot  n})(g) = \int f \, d\Haarof{X} + O_{\Lambda, \mu, \mu_D}\big((\mathrm{Lip}(f) + \operatorname{ht}(x_0)\|f\|_{\infty}\big) e^{- \theta n}).$$
		Moreover, one may choose $\theta\ge E_7\min(R(\mu),\mathrm{gap}(\mu))$, where $E_7$ is a constant depending only on $G$ and $\Lambda$.
	\end{theorem}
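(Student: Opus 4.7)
The strategy is the two-step method outlined by the authors: first obtain that the pushforward $\nu_n := \mu_D^{*\beta n}*\delta_{x_0}$ has high dimension on $X$ via Proposition~\ref{HighDimension}, and then use the spectral gap of $\mu$ to convert high dimension into effective equidistribution of $\mu^{*n}*\nu_n$. By subtracting $\int f\,dm_X$ I may reduce at the outset to the case $\int f\,dm_X=0$, at the cost of a factor of $2$ in the implied constants. The central identity is
\[
\int f(gx_0)\,d(\mu^{*n}*\mu_D^{*\beta n})(g) \;=\; \int F_n\,d\nu_n, \qquad F_n:=\pi_X(\mu)^n f.
\]
Using $\mathrm{supp}(\mu^{*n})\subset B_{R(\mu) n}$ and $\|g\|\ll e^{E_6 R(\mu) n}$, a direct computation gives $\mathrm{Lip}(F_n)\ll e^{E_6 R(\mu) n}\mathrm{Lip}(f)$ and $\|F_n\|_\infty\le\|f\|_\infty$.

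Next I introduce the smoothed density $h_{n,\delta}(x):=\nu_n(B_\delta^X(x))/m_G(B_\delta)$, which satisfies $\int h_{n,\delta}\,dm_X=1$ by Fubini, and convert integration against $\nu_n$ into an $L^2$-inner product:
\[
\int F_n\,d\nu_n \;=\; \int F_n\cdot h_{n,\delta}\,dm_X + O\bigl(\delta\cdot\mathrm{Lip}(F_n)\bigr).
\]
To handle the non-compactness of $X$, I split $h_{n,\delta}=h_{n,\delta}^{H}+h_{n,\delta}^{>H}$ via a height cutoff at level $H$, where $h_{n,\delta}^{H}:=h_{n,\delta}\cdot 1_{\{\mathrm{ht}\le H\}}$. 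Using the log-Lipschitz property (Proposition~\ref{heightftn}(3)) to relate $\mathrm{ht}$ at a point and in its $\delta$-neighbourhood, together with Lemma~\ref{QnD} applied to the Zariski-dense measure $\mu_D^{*\beta n}$, the tail contributes at most
\[
\Bigl|\int F_n\cdot h_{n,\delta}^{>H}\,dm_X\Bigr| \;\ll_{\Lambda,\mu_D}\; \|f\|_\infty \cdot \mathrm{ht}(x_0)/H.
\]

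For the main term, the spectral gap \eqref{exponentialmixing} combined with $\int f\,dm_X=0$ yields
\[
\bigl|\langle \pi_X(\mu)^n f, h_{n,\delta}^{H}\rangle\bigr| \;\le\; e^{-cn}\|f\|_2\,\|h_{n,\delta}^{H}\|_2
\]
for any $0<c<\mathrm{gap}(\mu)$ and $n$ large enough. From $\|h_{n,\delta}^H\|_2^2\le\|h_{n,\delta}^H\|_\infty\cdot\|h_{n,\delta}\|_1=\|h_{n,\delta}^H\|_\infty$ together with Proposition~\ref{HighDimension} at scale $\delta$ with dimension deficit $\gamma$, one obtains $\|h_{n,\delta}^H\|_\infty\ll H^{E_4}\delta^{-\gamma}$ whenever $\beta n\asymp C_0'(c_1,c_2,\gamma)\log(1/\delta)/\log(1/\varepsilon)$. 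Setting $\delta=e^{-\alpha n}$ and $H=e^{\theta n}$, the three error contributions become $e^{(E_6 R(\mu)-\alpha)n}\mathrm{Lip}(f)$, $\mathrm{ht}(x_0)\|f\|_\infty e^{-\theta n}$, and $\|f\|_\infty e^{(-c+E_4\theta/2+\gamma\alpha/2)n}$. Choosing $c=\mathrm{gap}(\mu)/2$, then $\theta$ a small fixed multiple of $\min(R(\mu),\mathrm{gap}(\mu))$, then $\alpha=E_6 R(\mu)+\theta$, and finally $\gamma$ a small enough multiple of $c/\alpha$, makes all three decay rates at least $\theta$ and produces the desired estimate. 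This simultaneously fixes $\beta=C_0'\alpha/\log(1/\varepsilon)$ and $\varepsilon_0=\varepsilon_0'(c_1,c_2,\gamma)$ from Proposition~\ref{HighDimension}, both depending on $\mu$ through $R(\mu)$ and $\mathrm{gap}(\mu)$.

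The main obstacle is the non-compactness of $X$: the density $h_{n,\delta}$ is not uniformly bounded, so the interplay between the height cutoff, the $\delta$-thickening in the definition of $h_{n,\delta}$, and the upper bound of Proposition~\ref{HighDimension} (which itself scales like $\mathrm{ht}^{E_4}$) must be tracked carefully using the quantitative non-divergence machinery of Section~\ref{SectionQuantitativeNonDivergence}. A second, structural subtlety is that the permissible deficit $\gamma$ is pinned down by both $R(\mu)$ and $\mathrm{gap}(\mu)$, so $\varepsilon_0$ must depend on $\mu$; this is also precisely the reason the present method does not yield effective equidistribution of $\mu_D^{*n}*\delta_{x_0}$ on its own, as that would require control on $\mathrm{gap}(\mu_D)$.
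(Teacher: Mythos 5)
Your overall strategy is the same as the paper's: reduce to $\int f\,dm_X=0$, pass to $F_n=\pi_X(\mu)^n f$, mollify $\nu_n$ against a $\delta$-ball density $h_{n,\delta}$, cut at a height threshold, apply the spectral gap, and invoke Proposition~\ref{HighDimension}. Your parameter bookkeeping at the end is also sound. However, there is a genuine gap in the order of operations, and it contains a concrete factual error.

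You mollify $\nu_n$ into $h_{n,\delta}$ \emph{before} performing the height cutoff, claiming $\int h_{n,\delta}\,dm_X=1$ by Fubini and
\[
\int F_n\,d\nu_n \;=\; \int F_n\cdot h_{n,\delta}\,dm_X + O\bigl(\delta\,\mathrm{Lip}(F_n)\bigr).
\]
Neither of these holds on a non-compact $X$. The Fubini computation actually gives $\int h_{n,\delta}\,dm_X=\int_X \frac{m_X(B_\delta^X(x))}{m_G(B_\delta)}\,d\nu_n(x)$, and the fraction is strictly less than $1$ whenever $\mathrm{inj}(x)<\delta$, i.e.\ whenever $x$ is deep enough in the cusp (by Proposition~\ref{heightftn}(1), whenever $\mathrm{ht}(x)>E_1\delta^{-\kappa_1}$). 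Consequently your approximation acquires the extra error
\[
\int_X F_n(x)\Bigl(1-\tfrac{m_X(B_\delta^X(x))}{m_G(B_\delta)}\Bigr)\,d\nu_n(x)\;=\;O\Bigl(\|f\|_\infty\,\nu_n(\{\mathrm{ht}>E_1\delta^{-\kappa_1}\})\Bigr),
\]
which you never account for. Your subsequent cutoff $h_{n,\delta}=h_{n,\delta}^H+h_{n,\delta}^{>H}$ is performed on the $m_X$-side and does not see this $\nu_n$-side error; the two tails are genuinely different quantities. This is precisely why the paper truncates first (via $\int F\,1_{\{\mathrm{ht}\le E_8\delta^{-\kappa_1\eta}\}}\,d\nu_n$, an integral against $\nu_n$ controlled by Lemma~\ref{QnD}) and only then mollifies against a function $h_{n,\delta,\eta}$ that already carries the matching cutoff, so that $\mathrm{inj}\geq 2\delta$ holds everywhere the identification $B_\delta^X(x)\cong B_\delta\subset G$ is used (see Lemma~\ref{MeasureApproximation}). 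The missing error turns out to be of the form $O(\|f\|_\infty\,\mathrm{ht}(x_0)\,\delta^{\kappa_1})$ by Lemma~\ref{QnD}, so it would be absorbed into the final estimate with the same parameter choices and would not change the conclusion --- but you need to track it explicitly (and fix the order of truncation/mollification, or equivalently add this term to your approximation) for the argument to be complete.
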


	In order to apply \eqref{exponentialmixing}, we next prove that we can compare $\int f \, d\nu_n$ with a suitable inner product. 
	
	\begin{lemma}\label{MeasureApproximation}
		Let $\mu_D$ be a probability measure on $G$ with Zariski dense support. Let $x_0 \in X$ and $\nu_n = \mu_D^{*n}*\delta_{x_0}$. For $\delta>0$, $0<\eta < 1$ and $n\in\mathbb{N}$, let $h_{n,\delta,\eta}:X\to\mathbb{R}_{\ge0}$ be the function defined by \begin{equation}\label{hDefi}
			h_{n,\delta,\eta}(x):=\frac{1}{m_G(B_\delta)}\nu_n(B_\delta^X(x)) 1_{\{ \operatorname{ht} \leq E_1\delta^{-\kappa_1\eta} \}}(x).
		\end{equation} Then for any $f\in \mathrm{Lip}(X)$ and $\delta > 0$ small enough,
		\begin{align}
			\int_X f\cdot  1_{\{ \operatorname{ht} \leq E_8\delta^{-\kappa_1\eta}\}} \, d\nu_n&=\int_X f(x)h_{n,\delta,\eta}(x)\, dm_X(x) \nonumber \\ &+O_{\Lambda, \mu_D}(\delta \cdot \mathrm{Lip}(f)+  \mathrm{ht}(x_0)\delta^{\kappa_1\eta}\|f\|_{\infty}), \label{hEstimate}
		\end{align}
		where $E_8:=E_2^{-1}E_1$.
	\end{lemma}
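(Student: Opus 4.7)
The plan is to expand $\int_X h_{n,\delta,\eta} f\, dm_X$ by Fubini, analyze the resulting inner integral pointwise according to $\operatorname{ht}(y)$ for $y$ in the support of $\nu_n$, and then absorb the high-height contribution using the quantitative non-divergence estimate of Lemma~\ref{QnD}. Using $1_{B_\delta^X(x)}(y) = 1_{B_\delta^X(y)}(x)$ and Fubini,
$$\int_X h_{n,\delta,\eta}(x) f(x)\, dm_X(x) = \frac{1}{m_G(B_\delta)} \int_X \int_{B_\delta^X(y)} f(x) 1_{\{\operatorname{ht} \leq E_1\delta^{-\kappa_1\eta}\}}(x)\, dm_X(x)\, d\nu_n(y).$$

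The key input is the log-Lipschitz estimate \eqref{LogLipschitz}: for $\delta<1$ and $x\in B_\delta^X(y)$ one has $x=gy$ with $g\in B_\delta^G\subset B_1^G$, so $E_2^{-1}\operatorname{ht}(y)\leq\operatorname{ht}(x)\leq E_2\operatorname{ht}(y)$. I would then split the $y$-integration into three regimes. First, if $\operatorname{ht}(y)\leq E_8\delta^{-\kappa_1\eta}=E_2^{-1}E_1\delta^{-\kappa_1\eta}$, then $\operatorname{ht}(x)\leq E_1\delta^{-\kappa_1\eta}$ throughout $B_\delta^X(y)$, so the inner height cutoff is identically $1$; moreover, Proposition~\ref{heightftn}~(1) gives $\operatorname{inj}(y)\geq(E_1/\operatorname{ht}(y))^{1/\kappa_1}\geq\delta^\eta>\delta$ for $\delta$ small (since $\eta<1$), so $B_\delta^X(y)$ lifts isometrically from $B_\delta^G$ and $m_X(B_\delta^X(y))=m_G(B_\delta)$. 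The Lipschitz bound $|f(x)-f(y)|\leq\operatorname{Lip}(f)\delta$ then yields an inner integral equal to $f(y)m_G(B_\delta)+O(\operatorname{Lip}(f)\delta\cdot m_G(B_\delta))$. Second, if $\operatorname{ht}(y)>E_1E_2\delta^{-\kappa_1\eta}$, every $x\in B_\delta^X(y)$ has $\operatorname{ht}(x)>E_1\delta^{-\kappa_1\eta}$, so the inner integral vanishes. Third, on the intermediate band $E_8\delta^{-\kappa_1\eta}<\operatorname{ht}(y)\leq E_1E_2\delta^{-\kappa_1\eta}$ the inner integral is bounded trivially by $\|f\|_\infty m_G(B_\delta)$.

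Dividing by $m_G(B_\delta)$ and summing the three contributions,
$$\int_X h_{n,\delta,\eta}(x)f(x)\,dm_X(x)=\int_X f\cdot 1_{\{\operatorname{ht}\leq E_8\delta^{-\kappa_1\eta}\}}\,d\nu_n + O\bigl(\operatorname{Lip}(f)\delta + \|f\|_\infty\nu_n(\{\operatorname{ht}>E_8\delta^{-\kappa_1\eta}\})\bigr).$$
To close the argument I would apply Lemma~\ref{QnD} with $\mathbf{h}=E_8\delta^{-\kappa_1\eta}$ to $\nu_n=\mu_D^{*n}*\delta_{x_0}$, obtaining $\nu_n(\{\operatorname{ht}>E_8\delta^{-\kappa_1\eta}\})\ll_{\Lambda,\mu_D}\operatorname{ht}(x_0)\delta^{\kappa_1\eta}$, which is exactly the claimed error term $\operatorname{ht}(x_0)\delta^{\kappa_1\eta}\|f\|_\infty$.

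The only delicate point I anticipate is coordinating the two height cutoffs $E_1$ and $E_8=E_2^{-1}E_1$: the extra factor of $E_2$ is precisely the buffer required so that the log-Lipschitz property transports the bound on $\operatorname{ht}(y)$ to $\operatorname{ht}(x)$ across an entire $\delta$-ball, confining boundary effects to the intermediate band where non-divergence handles them. Everything else is routine Fubini, Lipschitz approximation, and the isometric lift of small balls, so the heart of the argument is this alignment of thresholds with the Lipschitz constant $E_2$ of the height.
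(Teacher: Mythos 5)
Your proposal is correct and matches the paper's argument in all essentials: Fubini, a split according to the height of the $\nu_n$-variable (the paper uses a two-part split, $\operatorname{ht}\leq E_8\delta^{-\kappa_1\eta}$ versus its complement, folding your regime 2 into the trivial $\|f\|_\infty$ bound), the log-Lipschitz and injectivity-radius control from Proposition~\ref{heightftn}, and Lemma~\ref{QnD} for the high-height tail. The only cosmetic difference is that the paper pushes the inner $m_X$-integral to an $m_G$-integral over $B_\delta$ via the isometric lift before estimating, whereas you work directly with the $m_X$-integral over $B_\delta^X(y)$; the key inputs and the role of $E_2$ in separating the thresholds are identical.
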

	
	\begin{proof}
		Note that if $\operatorname{ht}(x) \leq E_1\delta^{-\kappa_1\eta}$ then $\operatorname{inj}(x)\geq \delta^\eta \geq 2\delta$ by (1) of Proposition \ref{heightftn} for $\delta$ small enough in terms of $\eta$. Therefore, using Fubini's Theorem, 
		
		\begin{align*}
			&\int_X f(y)h_{n,\delta,\eta}(y)\, dm_X(y)  \\ &= \frac{1}{m_G(B_\delta)} \int_X  \int_X f(y)1_{B_\delta^X(y)}(x) 1_{\{ \operatorname{ht} \leq E_1\delta^{-\kappa_1\eta} \}}(y)\, d\nu_n(x)dm_X(y) \\
			&=\int_X \left(\frac{1}{m_G(B_\delta)}\int_{B_{\delta}} f(gx) 1_{\{ \operatorname{ht} \leq E_1\delta^{-\kappa_1\eta} \}}(gx) \, dm_G(g) \right) d\nu_n(x). 
		\end{align*}
		For convenience, write  $$A_{\delta,\eta}(x) = \frac{1}{m_G(B_\delta)}\int_{B_{\delta}} f(gx) 1_{\{ \operatorname{ht} \leq E_1\delta^{-\kappa_1\eta} \}}(gx) \, dm_G(g).$$       
		
		If $\operatorname{ht}(x) \leq E_8\delta^{-\kappa_1\eta}$, then $\operatorname{ht}(gx) \leq E_1\delta^{-\kappa_1\eta}$ for any $g\in B_\delta$ by (3) of Proposition \ref{heightftn}. We thus have for $x$ with $\operatorname{ht}(x) \leq E_8\delta^{-\kappa_1\eta}$,
		$$f(gx) 1_{\{ \operatorname{ht} \leq E_1\delta^{-\kappa_1\eta} \}}(gx)= f(x)1_{\{ \operatorname{ht} \leq E_8\delta^{-\kappa_1\eta} \}}(x)+O(\delta\cdot\operatorname{Lip}(f)),$$
		implying
		\begin{equation}\label{hEstimateCpt}
			\int_{\{\operatorname{ht} \leq E_8\delta^{-\kappa_1\eta}\}} A_{\delta,\eta}(x) \, d\nu_n(x) =\int_X f\cdot  1_{\{ \operatorname{ht} \leq E_8\delta^{-\kappa_1\eta}\}}d\nu_n(x)+O(\delta\cdot\operatorname{Lip}(f))
		\end{equation}
		On the other hand, as $||A_{\delta,\eta}||_{\infty} \leq ||f||_{\infty}$, by Lemma \ref{QnD},
		\begin{align}
			\int_{\{\operatorname{ht}> E_8\delta^{-\kappa_1\eta}\}} A_{\delta,\eta}(x) \, d\nu_n(x) &\leq \nu_n(\{x\in X: \operatorname{ht}(x)> E_8\delta^{-\kappa_1\eta}\})\|f\|_{\infty} \nonumber  \\ &\ll_{\Lambda, \mu_D}   \mathrm{ht}(x_0)\delta^{\kappa_1\eta}\|f\|_{\infty}. \label{hEstimateNcpt}
		\end{align}
		Combining \eqref{hEstimateCpt} and \eqref{hEstimateNcpt} we get \eqref{hEstimate}.
	\end{proof}
	
	\begin{proof}[Proof of Theorem~\ref{HighDimensiontoQuantitativeEquidistribution0}]
		Upon replacing $f$ by $f - \int f \, d\Haarof{X}$ it suffices to show the claim for a bounded Lipschitz function $f \in \mathrm{Lip}(X)$ satisfying $\int f \, d\Haarof{X} = 0$ and therefore it suffices to show $$\bigg| \int f(gx_0) \, d(\mu^{*n}*\mu_D^{*\beta \cdot n})(g) \bigg| \ll_{\Lambda, \mu_D} (\mathrm{Lip}(f) + \operatorname{ht}(x_0)||f||_{\infty}) e^{-\theta n},$$ where $\beta$ is a parameter that will be determined below and for convenience we make no notational distinction between possibly non-integer number $\beta \cdot n$ and the closest integer to it.
		
		Denote $F=\pi_X(\mu)^n f$ and $\nu_n = \mu_D^{*\beta \cdot n} * \delta_{x_0}$ such that 
		\begin{equation}\label{Estimate1}
			\bigg| \int f(gx_0) \, d(\mu^{*n}*\mu_D^{*\beta \cdot n})(g) \bigg| = \bigg|\int F \, d\nu_{n}\bigg|.
		\end{equation}
		
		Set 
		\begin{equation}\label{ParameterGamma}
			\gamma = \min\left(\frac{\mathrm{gap}(\mu)}{16 E_6 R(\mu)},\frac{E_4\kappa_1}{2}\right).
		\end{equation} Then by Lemma \ref{QnD} it holds that 
		\begin{equation}\label{IntegralSplit}
			\begin{aligned}
				\int F \, d\nu_n &= \int F 1_{\{\mathrm{ht} \leq  E_8\delta^{-\frac{\gamma}{E_4}}\}} \, d\nu_{n}+\int F 1_{\{\mathrm{ht} >  E_8\delta^{-\frac{\gamma}{E_4}}\}} \, d\nu_{n} \\
				&=\int F 1_{\{\mathrm{ht} \leq  E_8\delta^{-\frac{\gamma}{E_4}}\}} \, d\nu_{n} + O_{\Lambda,\mu_D}( \operatorname{ht}(x_0)\delta^{\frac{\gamma}{E_4}}\|F\|_{\infty}).
			\end{aligned}
		\end{equation}
		
		Let $\eta=\frac{\gamma}{E_4\kappa_1}$ so that $0 < \eta \leq \frac{1}{2}$ and $\frac{\gamma}{E_4}=\kappa_1\eta$. By Lemma \ref{MeasureApproximation} and \eqref{IntegralSplit} we have
		$$\int F \, d\nu_{n} =\int_X F(x)h_{n,\delta,\eta}(x)\, dm_X(x)+O_{\Lambda,\mu_D}(\delta \cdot \mathrm{Lip}(F)+\mathrm{ht}(x_0)\delta^{\frac{\gamma}{E_4}}\|F\|_{\infty}).$$
		Recall that $\mathrm{supp}(\mu) \subset B_{R(\mu)}$. Then for some absolute constant $E_6>0$,
		$$\operatorname{Lip}(F)\leq \left(\sup_{g\in\mathrm{supp}(\mu^{*n})}\|g\|\right)\operatorname{Lip}(f) \leq e^{E_6R(\mu)n}\operatorname{Lip}(f),\quad \|F\|_{\infty}\leq \|f\|_{\infty}.$$
		Hence,
		\begin{equation}\label{Estimate2}
			\int F \, d\nu_{n} =\int_X F(x)h_{n,\delta,\eta}(x)\, dm_X(x)+O_{\Lambda,\mu_D}(\delta e^{E_6R(\mu)n} \mathrm{Lip}(f)+\mathrm{ht}(x_0)\delta^{\frac{\gamma}{E_4}}\|f\|_{\infty}).
		\end{equation}
		
		Let $\varepsilon_0'(c_1,c_2,\gamma)$ and $C_0'(c_1,c_2,\gamma)$ be the constants from  Proposition \ref{HighDimension}. We write $C_0 = C_0(c_1,c_2,\gamma) = C_0'(c_1,c_2,\gamma)$ and$$\varepsilon_0(c_1,c_2,\gamma):=\min(\varepsilon'_0(c_1,c_2,\gamma),e^{-2E_6C_0R(\mu)}).$$  Let $\mu_D$ be a $(c_1,c_2,\eps)$-Diophantine probability measure with $\varepsilon\leq\varepsilon_0(c_1,c_2,\gamma)$ and set $$\beta = \beta(\mu,\eps) =  \frac{2C_0 E_6 R(\mu)}{ \log \frac{1}{\eps}}.$$ Then we claim that for $n\asymp \frac{1}{2 E_6 R(\mu)}\log \frac{1}{\delta}$ it holds that $\|h_{n,\delta,\eta}\|_{2}\ll \delta^{-2\gamma}$. Indeed, with this choice of $n$ it holds that $$\beta \cdot n \asymp C_0 \frac{\log \frac{1}{\delta}}{\log \frac{1}{\eps}} $$ and hence by Proposition \ref{HighDimension}, for all $x \in X$, $$\nu_n(B_{\delta}^X(x)) \leq \mathrm{ht}(x)^{E_4}\delta^{\dim G - \gamma}\quad \textrm{as}\quad \beta \cdot n \asymp C_0\frac{\log \frac{1}{\delta}}{\log \frac{1}{\varepsilon}}.$$ Therefore $||h_{n, \delta , \eta}||_{\infty} \ll \delta^{-2\gamma}$ for $n \asymp \frac{1}{2 E_6 R(\mu)} \log \frac{1}{\delta}$, hence $$\|h_{n,\delta,\eta}\|_{2}^2\leq \|h_{n,\delta,\eta}\|_{L^{\infty}}^2 \ll \delta^{-4\gamma}.$$
		Applying \eqref{exponentialmixing} with $f_1=f$ and $f_2=h_{n,\delta,\eta}$,
		\begin{equation}\label{Estimate3}
			\begin{aligned}
				\bigg|\int_X F(x)h_{n,\delta,\eta}(x)\, dm_X(x)\bigg|&=\bigg|\int_X (\pi_X(\mu)^n f)(x)h_{n,\delta,\eta}(x)\, dm_X(x)\bigg|\\
				&\ll e^{-\mathrm{gap}(\mu) \cdot n/2}\|f\|_2\|h_{n,\delta,\eta}\|_2\\
				&\ll e^{-\mathrm{gap}(\mu) \cdot n/2}\delta^{-2\gamma}\|f\|_2
			\end{aligned}
		\end{equation} for $n$ sufficiently large. 
		
		Combining \eqref{Estimate1}, \eqref{Estimate2} and \eqref{Estimate3},
		\begin{align}
			\bigg|\int f \, d(\mu^{*n}*\nu_n)\bigg| &= \bigg|\int F \, d\nu_{n}\bigg| \nonumber \\
			&\ll_{\Lambda,\mu_D} \delta e^{E_6R(\mu)n} \mathrm{Lip}(f)+\mathrm{ht}(x_0)\delta^{\frac{\gamma}{E_4}}\|f\|_{\infty}+e^{-\mathrm{gap}(\mu) \cdot n/2}\delta^{-2\gamma}\|f\|_2. \label{MainEstimate}
		\end{align}
		Choosing $\delta=e^{-2 E_6 R(\mu) n}$ and as $\gamma = \min\left(\frac{\mathrm{gap}(\mu)}{16 E_6 R(\mu)},\frac{E_4\kappa_1}{2} \right)$, we can bound \eqref{MainEstimate} by $$\leq e^{-E_6R(\mu)n}\mathrm{Lip}(f) + h(x_0)e^{-\min(\frac{\mathrm{gap}(\mu)}{4 E_4},E_6\kappa_1 R(\mu))n}||f||_{\infty} + e^{-\frac{\mathrm{gap}(\mu)}{4}n}||f||_2$$ for sufficiently large $n$. Therefore setting $$\theta = \min\left(E_6R(\mu), E_6\kappa_1R(\mu) ,\frac{\mathrm{gap}(\mu)}{4 E_4},\frac{\mathrm{gap}(\mu)}{4} \right) \geq E_7\min(R(\mu),\mathrm{gap}(\mu))$$ for $E_7$ an absolute constant depending on $G$ and $\Lambda$ we conclude
		\begin{equation}
			\begin{aligned}
				\bigg|\int f(gx_0) \, d(\mu^{*n}*\mu_D^{*\beta \cdot n})(g)\bigg|&\ll_{\Lambda,\mu_D} e^{-\theta n}(\mathrm{Lip}(f) + \operatorname{ht}(x_0)||f||_{\infty}).
			\end{aligned}
		\end{equation} for $n$ large enough in terms of of $E_6$ and $R(\mu)$ and therefore $\delta$ small enough. Thus to get a bound holding for all $n$, the implied constant additionally depends on $\mu$.
	\end{proof}
	
	We furthermore mention the following corollary of our method concerning quantitative equidistribution of $\mu^{*n}*\delta_{x_0}$. We note that the assumption~\eqref{HighDimAssumption} is satisfied by \cite{BenoistDeSaxce2016} for compact simple Lie groups and $\mu$ a symmetric measure supported on finitely many matrices with algebraic entries and generating a dense subgroup. For the latter case, we mention that the Lipschitz norm appears in \eqref{EffectiveEqui} instead of the Sobolev norm, as is common in the literature.  
	
	\begin{corollary}\label{HighDimensiontoQuantitativeEquidistribution}
		Let $G$, $\Lambda$ and $X$ be as in Theorem~\ref{EffectiveDiameter}. Let $\mu$ be a compactly supported probability measure on $G$ with a spectral gap on $X$, let $x_0 \in X$ and denote $\nu_n = \mu^{*n} *\delta_{x_0}$. 
		
		Then there exists $\gamma = \gamma(\mu)$ such that the following holds. Assume there exists constants $C_0, E_4 >0$ such that for any $\delta > 0$ small enough, $x\in X$, and $n \asymp C_0 \log \frac{1}{\delta}$ \begin{equation}\label{HighDimAssumption}
			\nu_n(B_{\delta}^X(x)) \leq \mathrm{ht}(x)^{E_4}\delta^{\dim G - \gamma}.
		\end{equation} Then there exists $\theta = \theta(\mu)$ such that  for every bounded Lipschitz function $f \in \mathrm{Lip}(X)$ and $n\geq 1$, 
		\begin{equation}\label{EffectiveEqui}
			\int f(gx_0)  \, d\mu^{*n}(g) = \int f \, d\Haarof{X} + O_{\Lambda,\mu, C_0, E_4}((\mathrm{Lip}(f) + \operatorname{ht}(x_0)||f||_{\infty}) e^{- \theta n}).
		\end{equation}
	\end{corollary}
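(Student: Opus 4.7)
The plan is to replay the argument of Theorem~\ref{HighDimensiontoQuantitativeEquidistribution0}, with the auxiliary Diophantine factor $\mu_D^{*\beta n}$ replaced by a prefix $\mu^{*n_1}$ of the walk itself: hypothesis~\eqref{HighDimAssumption} provides exactly the high-dimension input that Proposition~\ref{HighDimension} supplied for $\mu_D$ in the proof of that theorem. I will write $\mu^{*n}=\mu^{*n_2}*\mu^{*n_1}$ with $n=n_1+n_2$ and choose $n_1\asymp C_0\log\tfrac{1}{\delta}$ to activate~\eqref{HighDimAssumption} at scale $\delta$, while $n_2$ implements the spectral-gap mixing.

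After reducing to the case $\int f\,dm_X=0$, set $F=\pi_X(\mu)^{n_2}f$ and $\nu_{n_1}=\mu^{*n_1}*\delta_{x_0}$, so that $\int f(gx_0)\,d\mu^{*n}(g)=\int F\,d\nu_{n_1}$. I will then apply Lemma~\ref{QnD} to discard the region $\{\mathrm{ht}>E_8\delta^{-\kappa_1\eta}\}$ and Lemma~\ref{MeasureApproximation} with $\eta=\gamma/(E_4\kappa_1)$ to convert the truncated integral into the $L^2(X)$ pairing $\langle F,h_{n_1,\delta,\eta}\rangle$, at a cost of $O\bigl(\mathrm{ht}(x_0)\delta^{\kappa_1\eta}\|f\|_\infty+\delta\cdot\mathrm{Lip}(F)\bigr)$. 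The choice of $\eta$ is arranged so that on the truncation the factor $\mathrm{ht}(x)^{E_4}$ in~\eqref{HighDimAssumption} is absorbed into $\delta^{-\gamma}$, giving $\|h_{n_1,\delta,\eta}\|_\infty\ll\delta^{-2\gamma}$ and hence the same bound on $\|h_{n_1,\delta,\eta}\|_2$.

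Applying~\eqref{exponentialmixing} to $\langle \pi_X(\mu)^{n_2}f,h_{n_1,\delta,\eta}\rangle$ and using $\mathrm{Lip}(F)\leq e^{E_6 R(\mu)n_2}\mathrm{Lip}(f)$ and $\|F\|_\infty\leq\|f\|_\infty$, I arrive at an estimate of the form
\[
\delta\, e^{E_6 R(\mu)n_2}\mathrm{Lip}(f)+\mathrm{ht}(x_0)\delta^{\kappa_1\eta}\|f\|_\infty+e^{-\mathrm{gap}(\mu)n_2/2}\delta^{-2\gamma}\|f\|_2.
\]
Setting $\delta=e^{-2E_6 R(\mu)n_2}$ equalizes the first term to $e^{-E_6R(\mu)n_2}\mathrm{Lip}(f)$, and taking $\gamma=\gamma(\mu)\leq\min\!\bigl(\tfrac{\mathrm{gap}(\mu)}{16E_6R(\mu)},\tfrac{E_4\kappa_1}{2}\bigr)$, exactly as in~\eqref{ParameterGamma}, forces the third term to decay at rate $e^{-(\mathrm{gap}(\mu)/4)n_2}$. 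Since $n_1\asymp C_0\log\tfrac{1}{\delta}\asymp n_2$, the total length $n=n_1+n_2$ is comparable to $n_2$, which yields~\eqref{EffectiveEqui} with $\theta=\theta(\mu)>0$ of the same shape $\theta\gtrsim\min(R(\mu),\mathrm{gap}(\mu))$ as in Theorem~\ref{HighDimensiontoQuantitativeEquidistribution0}.

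The main obstacle is the same one faced in that theorem: the blow-up $\delta^{-2\gamma}$ in $\|h_{n_1,\delta,\eta}\|_2$ produced by the high-dimension hypothesis must not swamp the spectral-gap decay $e^{-\mathrm{gap}(\mu)n_2/2}$ after $\delta$ is tied to $n_2$ through the Lipschitz equalization. This is precisely what forces $\gamma$ to be controlled by the ratio $\mathrm{gap}(\mu)/R(\mu)$, which is why the admissible $\gamma=\gamma(\mu)$ in the statement must depend on $\mu$; no other structural difficulty arises beyond the bookkeeping already developed for Theorem~\ref{HighDimensiontoQuantitativeEquidistribution0}.
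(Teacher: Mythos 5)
Your proposal is correct and follows essentially the same route as the paper's proof: both split $\mu^{*n}$ into a prefix that activates the high-dimension hypothesis and a suffix that delivers spectral-gap mixing, pass through Lemma~\ref{QnD} and Lemma~\ref{MeasureApproximation} with $\eta=\gamma/(E_4\kappa_1)$, and balance the three-term error with $\gamma$ as in \eqref{ParameterGamma} and $\delta$ tied exponentially to the mixing length. The only cosmetic difference is bookkeeping: the paper fixes proportions $m=\alpha n$, $n_0=\beta n$ and $\delta=e^{-An}$, choosing $A,\alpha$ at the end, whereas you fix $\delta=e^{-2E_6R(\mu)n_2}$ first and recover $n_1\asymp C_0\log\tfrac1\delta$; both give $n_1,n_2\asymp n$ with constants depending on $C_0$ and $\mu$, and hence the same final decay rate.
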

	
	\begin{proof}
		The proof is similar to the one of Theorem~\ref{HighDimensiontoQuantitativeEquidistribution0}. Indeed, we write $n = m + n_0$ for $m =  \alpha \cdot  n $ and $n_0 = \beta \cdot n$ with $\alpha,\beta > 0$ fixed constants to be determined and satisfying $\alpha + \beta = 1$. For a bounded Lipschitz function $f\in L^2_0(X)$ denote $F = \pi_X(\mu)^mf$ and then $\int f(gx_0)  \, d\mu^{*n}(g) = \int F(x) \, d\nu_{n_0}(x)$. One then proceeds as in the proof of \eqref{MainEstimate} to deduce that $| \int F \, d\nu_{n_0} |$ can be bounded for any $\gamma > 0$, for which \eqref{HighDimAssumption} holds, by 
		\begin{equation}\label{MainEstimateNew}
			\ll_{\Lambda,\mu} \delta e^{E_6 R(\mu) m}\mathrm{Lip}(f) + \mathrm{ht}(x_0)\delta^{\frac{\gamma}{E_4}}||f||_{\infty} + e^{-\mathrm{gap}(\mu)\cdot m/2}\delta^{-2\gamma}||f||_2,
		\end{equation}
		assuming that $n_0 \asymp C_0 \log \frac{1}{\delta}$.
		
		We proceed with choosing a suitable $\gamma$. Indeed if $\delta = e^{-A n}$ for $A > 0$ a constant to be chosen, then \eqref{MainEstimateNew} is bounded by 
		\begin{equation}\label{MainEstimateChooseConstant}
			\ll_{\Lambda,\mu} e^{E_6 R(\mu) m - An}\mathrm{Lip}(f) + \mathrm{ht}(x_0)e^{-A\frac{\gamma}{E_4}n}||f||_{\infty} + e^{2\gamma A n-\mathrm{gap}(\mu)\cdot m/2}||f||_2
		\end{equation} assuming that $\beta n = n_0 \asymp A \cdot  C_0 \cdot  n$ or equivalently $1 \asymp \frac{A\cdot C_0}{\beta}$. 
		
		Therefore, in order for \eqref{MainEstimateChooseConstant} to decay as claimed in \eqref{EffectiveEqui}, we require $1 \asymp \frac{A\cdot C_0}{\beta}$ as well as $$E_6 R(\mu) m - An < 0 \quad \text{ and } \quad 2\gamma A n-\mathrm{gap}(\mu)m/2 < 0,$$ which is equivalent to $$ \frac{E_6 R(\mu)}{A} < \frac{n}{m} = \frac{1}{\alpha} < \frac{\mathrm{gap}(\mu)}{4\gamma A } \quad\text{ and }\quad 1 \asymp \frac{A \cdot C_0}{\beta}.$$ Choosing $\gamma, \alpha$ and $A$ suitably the claim follows. Indeed, we may choose the parameter $\gamma$ as in the proof of Theorem~\ref{HighDimensiontoQuantitativeEquidistribution0} as  $$\gamma = \min\left(\frac{\mathrm{gap}(\mu)}{16 E_6 R(\mu)},\frac{E_4\kappa_1}{2}  \right).$$ Further we choose $\alpha > 0$ small enough such that $\frac{\alpha}{1 - \alpha} < \frac{E_6 R(\mu)}{C_0}$ and finally $A = 2E_6 R(\mu)\alpha$. With these choices, \eqref{EffectiveEqui} holds for a suitable $\theta$. 
	\end{proof}
	
	\section{Proof of Theorem~\ref{EffectiveDiameter} and Theorem~\ref{EffectiveDensity}}\label{SectionEffectiveDensity}
	
	\subsection{Proof of Theorem~\ref{EffectiveDiameter}}
	By \cite{BreuillardGelander2003}, a dense subgroup of $G$ contains a finitely generated dense subgroup, so we may assume that $S$ is a finite set. Let $\mu$ be the uniform probability measure on the finite symmetric set $S\subset G$. We distinguish the case when $G$ is compact and non-compact. If $G$ is compact, then by \cite{BenoistDeSaxce2016}, \eqref{HighDimAssumption} is satisfied and hence \eqref{EffectiveEqui} holds. For compact $G$, the latter straightforwardly implies the conclusion of Theorem~\ref{EffectiveDiameter}.

	We assume for the remainder of the proof that $G$ is non-compact. By Lemma~\ref{muspectralgap}, since $\mu$ generates a dense subgroup of $G$ and since $G$ is non-ameanable, $\mu$ has a spectral gap on $X$.  Let $c_1,c_2>0$ be the constants from Theorem \ref{BIG17Theorem3.1} for $\Gamma=\langle S\rangle$, and let $\varepsilon_0=\varepsilon_0(\mu,c_1,c_2)$ and $\theta=\theta(\mu)$ be as in Theorem \ref{MainTheorem}. By Theorem \ref{BIG17Theorem3.1} there exists a finitely supported symmetric $(c_1,c_2,\varepsilon)$-Diophantine probability measure $\mu_D$ with $0<\varepsilon\leq\varepsilon_0$ satisfying $\operatorname{supp}(\mu_D)\subset \Gamma\cap B_{\varepsilon}(e)$. Let $\beta = \beta(\mu,\eps)$ be the constant from Theorem~\ref{MainTheorem}. Since $\mu_D$ is finitely supported and its support is contained in $\Gamma$, there exists an integer $k_0\in\mathbb{N}$ such that $\operatorname{supp}(\mu_D^{*\beta}) \subseteq\operatorname{supp}(\mu^{*k_0}) = S^{k_0}$.
	
	By Theorem \ref{MainTheorem}, for any $f \in \mathrm{Lip}(X)$ and $n\geq 1$, \begin{equation}\label{EffectiveEquidistributionEstimate}
		\int f\, d(\mu^{*n}*\mu_D^{*\beta \cdot n}*\delta_{x_0}) = \int f \, d\Haarof{X} + O_{\Lambda,\mu}((\mathrm{Lip}(f) + \operatorname{ht}(x_0)||f||_{\infty}) e^{- \theta n}).
	\end{equation}
	Using (1) of Proposition \ref{heightftn}, for $r$ sufficiently small, $\mathrm{inj}(y)>r^{\frac{2}{\kappa_1}}$ for $y\in X(r^{-1})$. Recall $0<\kappa_1<1$ and let us write $r_1=r^{\frac{2}{\kappa_1}}$ for simplicity. For each $y\in X(r^{-1})$ we may choose a bounded Lipschitz function $f_{r,y}\in\operatorname{Lip}(X)$ such that $1_{B_{\frac{1}{2}r_1}(y)}\leq f_{r,y}\leq 1_{B_{r_1}(y)}$, $\operatorname{Lip}(f_{r,y})\ll r_1^{-1}$, and $\int f_{r,y} \, dm_X \gg r_1^{\dim G}$, where the implied constants only depend on $G$. Choose
	$$n= \left\lceil\frac{4\dim G}{\kappa_1\theta}(\log r^{-1}+\log \operatorname{ht}(x_0))\right\rceil,$$
	then the error term in \eqref{EffectiveEquidistributionEstimate} is bounded by
	\begin{align*}
		O_{\Lambda,\mu}((\mathrm{Lip}(f) + \operatorname{ht}(x_0)||f||_{\infty}) e^{- \theta n}) \ll_{\Lambda,\mu} (r_1^{-1}+\operatorname{ht}(x_0))e^{- \theta n} \ll_{\Lambda,\mu}  r_1^{2\dim G - 1}.
	\end{align*}
	It follows that
	$$\int f_{r,y}\, d(\mu^{*n}*\mu_D^{*\beta \cdot n}*\delta_{x_0}) \gg  r_1^{\dim G}  + O_{\Lambda,\mu}(r_1^{2\dim G - 1})>0$$
	for sufficiently small $r$ depending on $\Lambda$ and $\mu$. Therefore for any $y\in X(r^{-1})$ there exists $x\in \operatorname{supp}(f_{r,y})\subset B_r(y)$ with $$x\in \operatorname{supp}(\mu^{*n}*\mu_D^{*\beta \cdot n}*\delta_{x_0})\subseteq \operatorname{supp}(\mu^{*(k_0+1)n}*\delta_{x_0}) = S^{n(k_0 + 1)}x_0,$$
	hence we conclude that for $r$ sufficiently small depending on $\Lambda$ and $\mu$,
	$$\mathrm{diam}_{r}(X,S,x_0) \leq \frac{4\dim G}{\kappa_1\theta}(k_0+1)(\log r^{-1}+\log \operatorname{ht}(x_0)).$$
	The proof of Theorem~\ref{EffectiveDiameter} is complete. 
	
	\subsection{Proof of Theorem~\ref{EffectiveDensity}}
	For any given $r>0$ we may choose a maximal $r$-separated subset $\Xi$ of $\{\operatorname{ht}\leq r^{-1}\}$. Let $\Xi=\{x_1,\ldots,x_{|\Xi|}\}$. Note that $|\Xi|\asymp r^{-\dim G}$ and $X(r^{-1})$ is covered by the balls $B^X_r(x_1),\ldots,B^X_r(x_{|\Xi|})$.  
	
	Throughout the proof we fix a constant $A \geq 2 \dim G$. By Theorem~\ref{EffectiveDiameter}, there is a constant $C$ depending on $\Lambda$ and $S$ such that for $N = \lceil C\log r^{-2A}\rceil$ and any $y \in X(r^{-2A})$ there is $h(y,N,i) \in S^N$ such that $h(y,N,i)y \in B_{r}(x_i)$. We use here that $y \in X(r^{-2A})$ and therefore the contribution of the height of $y$ in Theorem~\ref{EffectiveDiameter} is at most of size $\log r^{-2A}$. 
	
	We claim for any $y \in X(r^{-2A})$, 
	\begin{equation}\label{MassEstimate}
		\mu^{*N}(h(y,N,i)) \geq r^B
	\end{equation}
	for $B = B(\Lambda,\mu,A)$ a constant depending on $\Lambda$, $\mu$ and $A$. Indeed, if $S$ is finite, then if every atom of $\mu$ has mass at least $0 < \rho < 1$, it follows that $\mu^{*N}(h(y,N,i)) \geq \rho^N|S|^{-N} \geq r^B$ for $B = 4\cdot A\cdot C\log(\frac{|S|}{\rho})$. If $S$ is not finite, then it must be countable as $\mathrm{Ad}(\mathrm{supp}(\mu))$ has algebraic entries. Therefore, again by \cite{BreuillardGelander2003}, there is a finite $S' \subset S$ with $\mu(S') \geq \frac{1}{2}$ and such that $S'$ generates a dense subgroup of $G$. \eqref{MassEstimate} then follows by applying the above argument to $S'$.
	
	For $x_0 \in X$ and $k\in\mathbb{N}$ denote by $P_{k,i}$ the set of elements $(g_1,\ldots,g_{kN})\in\operatorname{Supp}(\mu^{\otimes kN})$ such that the set $\{g_j\cdots g_1x_0: 1\leq j\leq kN\}$ does not intersect $B^X_r(x_i)$. 
	
	\begin{lemma}\label{PkiInductionEstimate}
		Let $A \geq 2 \dim G$ and let $1 \leq i \leq |\Xi|$. For $r$ small enough in terms of $\Lambda$ and $\mu$ the following holds. If $\mu^{\otimes kN}(P_{k,i})\geq \mathrm{ht}(x_0) \cdot r^{A + \dim G}$ for some $k \geq 1$, then $$\mu^{\otimes (k+1)N}(P_{k+1,i})\leq \left(1-r^{B}/2\right)\mu^{\otimes kN}(P_{k,i}).$$
	\end{lemma}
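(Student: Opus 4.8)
The plan is to condition on the first $kN$ steps of the walk. Writing $y = y(g_1,\ldots,g_{kN}) := g_{kN}\cdots g_1 x_0$, I would use the mass estimate \eqref{MassEstimate} to produce a definite conditional probability of entering $B^X_r(x_i)$ during the next block of $N$ steps, as long as the walk has not yet escaped too far into a cusp, and I would control the ``escaped'' part by the quantitative non-divergence of Lemma~\ref{QnD}.

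First I would record that $P_{k+1,i}$ projects into $P_{k,i}$ under forgetting the last $N$ coordinates: if the trajectory of length $(k+1)N$ misses $B^X_r(x_i)$, then so does its initial segment of length $kN$. Since $\mu^{\otimes(k+1)N} = \mu^{\otimes kN}\otimes\mu^{\otimes N}$ and the two blocks of increments are independent, Fubini then gives
$$\mu^{\otimes(k+1)N}(P_{k+1,i}) = \int_{P_{k,i}} \mu^{\otimes N}(\{(g_{kN+1},\ldots,g_{(k+1)N}) : (g_1,\ldots,g_{(k+1)N})\in P_{k+1,i}\})\, d\mu^{\otimes kN}(g_1,\ldots,g_{kN}).$$
On the part of $P_{k,i}$ with $\operatorname{ht}(y)\le r^{-2A}$, i.e.\ $y\in X(r^{-2A})$, I would fix the element $h(y,N,i)\in S^N$ provided by the setup, so that $h(y,N,i)y\in B^X_r(x_i)$. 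By independence, the event $\{g_{(k+1)N}\cdots g_{kN+1} = h(y,N,i)\}$ has conditional probability $\mu^{*N}(h(y,N,i))\ge r^B$ by \eqref{MassEstimate}, and on this event the trajectory reaches $B^X_r(x_i)$ at time $(k+1)N$, so $(g_1,\ldots,g_{(k+1)N})\notin P_{k+1,i}$. Hence the inner integrand is at most $1-r^B$ there and at most $1$ on the rest of $P_{k,i}$, which yields
$$\mu^{\otimes(k+1)N}(P_{k+1,i}) \le (1-r^B)(p-q) + q = (1-r^B)p + r^B q,$$
where $p := \mu^{\otimes kN}(P_{k,i})$ and $q := \mu^{\otimes kN}(P_{k,i}\cap\{\operatorname{ht}(y)>r^{-2A}\})$.

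It then remains to show $q\le p/2$. Pushing $\mu^{\otimes kN}$ forward under $(g_1,\ldots,g_{kN})\mapsto g_{kN}\cdots g_1$ turns the height condition into a bound on $\mu^{*kN}$, and Lemma~\ref{QnD} applied with $\mathbf h = r^{-2A}$ gives $q \le \mu^{*kN}(\{g: \operatorname{ht}(gx_0)\ge r^{-2A}\}) \ll_{\Lambda,\mu} r^{2A}\operatorname{ht}(x_0)$. Together with the hypothesis $p\ge \operatorname{ht}(x_0)\,r^{A+\dim G}$ this gives $q/p \ll_{\Lambda,\mu} r^{A-\dim G}\le r^{\dim G}$, using $A\ge 2\dim G$, whence $q\le p/2$ once $r$ is small enough in terms of $\Lambda$ and $\mu$. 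Substituting this bound into the previous display gives $\mu^{\otimes(k+1)N}(P_{k+1,i}) \le (1-r^B)p + r^B p/2 = (1-r^B/2)p$, which is the asserted inequality.

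I expect the only genuine difficulty to be bookkeeping: making the conditioning on the first $kN$ steps rigorous so that the (measurable) assignment $y\mapsto h(y,N,i)$ may legitimately be inserted inside the conditional probability, and verifying that the cusp-excursion term $q$ is truly negligible compared with $p$ --- which is exactly where the lower-bound hypothesis on $\mu^{\otimes kN}(P_{k,i})$ and the constraint $A\ge 2\dim G$ are used, along with the choice of $r$ small in terms of the implied constant of Lemma~\ref{QnD}.
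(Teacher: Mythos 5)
Your argument is correct and takes essentially the same route as the paper's own proof: condition on the first $kN$ increments, use Lemma~\ref{QnD} to control the mass that has wandered too far into the cusp, use the density element $h(y,N,i)$ and \eqref{MassEstimate} to give a uniform lower bound $r^B$ on the conditional probability of hitting $B^X_r(x_i)$ in the next $N$ steps for the remainder, and use the assumed lower bound on $\mu^{\otimes kN}(P_{k,i})$ to show the cusp mass is at most half the total. The only cosmetic difference is your height threshold $r^{-2A}$ versus the paper's $r^{-(A+2\dim G)}$; since $A \geq 2\dim G$ these are interchangeable and both close the argument the same way.
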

	
	\begin{proof}
		Let $Q_k$ be the set of elements $(g_1,\ldots,g_{kN})\in\operatorname{Supp}(\mu^{\otimes kN})$ such that $$g_{kN}\cdots g_1x_0\notin X(r^{-(A + 2 \dim G)}).$$ By Lemma \ref{QnD},
		\begin{align*}
			\mu^{\otimes kN}(Q_k)&=\mu^{*kN}(\{g\in G: \operatorname{ht}(gx_0)\ge r^{-(A + 2 \dim G)} \}) \\ &\ll_{\Lambda,\mu} r^{A + 2 \dim G}\operatorname{ht}(x_0).
		\end{align*}
		If $(g_1,\ldots,g_{kN})\in \operatorname{Supp}(\mu^{\otimes kN})\setminus Q_k$, then writing $y=g_{kN}\cdots g_1x_0$, there exists $h(y,N,i)\in S^N$ such that $h(y,N,i)g_{kN}\cdots g_{1}x_0\in B^X_r (x_i)$. It follows that
		\begin{equation*}
			\begin{aligned}
				\mu^{\otimes (k+1)N}(P_{k+1,i})&\leq \mu^{\otimes kN}(P_{k,i})-\mu^{\otimes kN}(P_{k,i}\setminus Q_k)\inf_{y \in X(r^{-2A})}\mu^{* N}(h(y,N,i))\\
				&\leq \mu^{\otimes kN}(P_{k,i})-r^{B}\mu^{\otimes kN}(P_{k,i}\setminus Q_k).
			\end{aligned}
		\end{equation*}
		Since we assume $\mu^{\otimes kN}(P_{k,i})\geq \mathrm{ht}(x_0)r^{A + \dim G}$, choosing $r$ sufficiently small in terms of $\mu$, 
		$$\mu^{\otimes kN}(P_{k,i}\setminus Q_k)\geq \mu^{\otimes kN}(P_{k,i})-\mu^{\otimes kN}(Q_{k})\geq \frac{1}{2}\mu^{\otimes kN}(P_{k,i}),$$
		showing the claim.
	\end{proof}

	\begin{proof}[Proof of Theorem~\ref{EffectiveDensity}]
		Let $A \geq 2\dim G$ and let $M_r = \lceil 16 r^{-D \cdot A}\rceil$ for $D$ a constant to be chosen sufficiently large depending on $\Lambda$ and $\mu$. Since $\log(1-x)\leq -x/4$ (for $x$ small enough), we have for sufficiently small $r$,
		$$M_r\log\left(1-\frac{1}{2}r^{B}\right)\leq -\frac{1}{8}r^{B}M_r\leq -r^{-A} \leq -r^{-A/2} + \dim G \cdot \log r,$$ choosing $D$ sufficiently large such that $B - DA \leq -A$. Notice that we may choose $D$ to only depend on $\Lambda$ and $\mu$.
		
		Hence $\left(1-\frac{1}{2}r^{B}\right)^{M_r} \leq  \exp(-r^{-A/2})r^{\dim G}$. Iterating Lemma~\ref{PkiInductionEstimate}, we deduce that 
		$$\mu^{\otimes M_r N}(P_{M_r,i})\leq \mathrm{ht}(x_0) \cdot r^{A + \dim G}$$
		for any $1\leq i\leq |\Xi|$. Let $\mathcal{E}:=\displaystyle\bigcup_{i=1}^{|\Xi|}P_{M_r, i}$. Then we have 
		$$\mu^{\otimes M_r N}(\mathcal{E})\leq \mathrm{ht}(x_0) \cdot  r^{A + \dim G} \cdot |\Xi|\ll \mathrm{ht}(x_0) \cdot r^{A},$$ and for any $(g_1,\cdots,g_{M_r N})\in \operatorname{Supp}(\mu^{\otimes M_r N})\setminus \mathcal{E}$ and $1\leq i\leq |\Xi|$ there exists $1\leq j(i)\leq M_r N$ such that $g_{j(i)}\cdots g_1x_0\in B^X_r(x_i)$, i.e. the set $\{g_j\cdots g_1x_0: 1\leq j\leq M_r N\}$ is $r$-dense in $X(r^{-1})$. The proof of \eqref{ProbEstimateEffectiveDensity} is complete as $M_{r}N \leq r^{-2 \cdot  D\cdot A}$ for $r$ small enough in terms of $\Lambda$, $\mu$ and $A$ upon replacing $A$ by $\frac{A}{2\cdot D}$ and thus setting $\alpha = \frac{1}{2 \cdot D}$. With this replacement, for the above argument to apply we require that $\frac{A}{2D} \geq 2 \dim G$.
		
		To pass from \eqref{ProbEstimateEffectiveDensity} to an almost sure statement about the orbit $(Y_{n,x_0})_{n \geq 1}$ we apply the Borel-Cantelli Lemma for $A > 0$ large enough. Indeed, for $n > 0$ write $r_n = n^{-\frac{1}{A}}$  and consider the set $$F_{n} = \{ (g_1, g_2, \ldots) \in G^{\mathbb{N}} \,:\, g_{n}\cdots g_1x_0 \text{ is not } r_n \text{-dense in } X(r_n^{-1}) \}.$$ Then $\sum_{n \geq 1} \mathbb{P}[F_n] < \infty$ for $A$ large enough and hence by the Borel-Cantelli Lemma the claim follows. 
	\end{proof}
	
	We note that in the cocompact case, we can improve the error rate in Theorem~\ref{EffectiveDensity} to an exponential one.
	
	\begin{theorem}\label{CompactEffectiveDensity}
		Let $G$, $\Lambda$, $X$ and $S$ be as in Theorem~\ref{EffectiveDiameter} and let $\mu$ be a probability measure on $G$ with support $S$. Assume that $X$ is compact. Then for $A > 0$ large enough depending on $\Lambda$ and $\mu$ the following holds: For any $x_0 \in X$,  
		\begin{equation}
			\mathbb{P}[(Y_{1,x_0}, \ldots , Y_{\lceil r^{-A}\rceil,x_0}) \text{ is not } r\text{-dense in } X(r^{-1})] \leq \mathrm{ht}(x_0)\cdot \exp(-r^{-\alpha \cdot  A}),
		\end{equation} for $r$ small enough in terms of $\Lambda, \mu$ and $A$ and for $\alpha = \alpha(\Lambda,\mu) > 0$ a constant depending on $\Lambda$  and $\mu$.
	\end{theorem}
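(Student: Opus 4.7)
The strategy is to adapt the proof of Theorem~\ref{EffectiveDensity}, exploiting that when $X$ is compact the height function is identically $1$, so the ``escape to the cusp'' set $Q_k$ appearing in Lemma~\ref{PkiInductionEstimate} is empty. Removing $Q_k$ eliminates the threshold condition $\mu^{\otimes kN}(P_{k,i})\ge \mathrm{ht}(x_0)\,r^{A+\dim G}$ that forced the iteration to stop after polynomially many steps in the general case, leaving an unconditional geometric contraction by the factor $(1-r^B)$ that can be iterated up to the full budget of $\lceil r^{-A}\rceil$ steps. This is precisely what will upgrade the polynomial error of Theorem~\ref{EffectiveDensity} to the claimed exponential one.

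Concretely, I will fix $A$ large, choose a maximal $r$-separated subset $\Xi=\{x_1,\ldots,x_{|\Xi|}\}$ of $X$ with $|\Xi|\asymp r^{-\dim G}$, and apply Theorem~\ref{EffectiveDiameter} with $N=\lceil C\log r^{-1}\rceil$. Crucially, since all trajectory points have height $1$, the $\log \mathrm{ht}$ contribution in Theorem~\ref{EffectiveDiameter} is absent and the constant $C=C(\Lambda,S)$ is independent of $A$. This yields, for every $y\in X$ and every $i$, an element $h(y,N,i)\in S^N$ with $h(y,N,i)y\in B_r^X(x_i)$; the counting argument leading to \eqref{MassEstimate} then gives $\mu^{*N}(\{h(y,N,i)\})\ge r^B$ for some $B=B(\Lambda,\mu)$ which is also independent of $A$. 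With $P_{k,i}$ defined as in the proof of Theorem~\ref{EffectiveDensity}, the one-step argument of Lemma~\ref{PkiInductionEstimate} runs verbatim with $Q_k=\emptyset$ and delivers the unconditional contraction
\begin{equation*}
\mu^{\otimes(k+1)N}(P_{k+1,i})\le (1-r^{B})\,\mu^{\otimes kN}(P_{k,i}) \qquad \text{for all } k\ge 1.
\end{equation*}
Iterating $M=\lfloor\lceil r^{-A}\rceil/N\rfloor\asymp r^{-A}/\log r^{-1}$ times and taking a union bound over $\Xi$ produces
\begin{equation*}
\mu^{\otimes MN}\!\Bigl(\bigcup_{i=1}^{|\Xi|}P_{M,i}\Bigr)\ll r^{-\dim G}\exp\!\bigl(-M\,r^{B}\bigr)\le \exp\!\bigl(-r^{-\alpha A}\bigr)
\end{equation*}
for a suitable $\alpha=\alpha(\Lambda,\mu)>0$, once $A$ is large enough in terms of $\Lambda$ and $\mu$ that $A-B$ exceeds a fixed positive fraction of $A$ (so that the $r^{-\dim G}$ factor can be absorbed). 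Any element of $\mathrm{supp}(\mu^{\otimes MN})$ outside this union gives a trajectory $(g_j\cdots g_1 x_0)_{1\le j\le MN}$ visiting every $B_r^X(x_i)$ and hence $r$-dense in $X(r^{-1})=X$, which is the desired conclusion.

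The main, and essentially only, obstacle is the bookkeeping: one must choose $A$ large enough that $B/A$ is small, ensuring simultaneously that $MN\le \lceil r^{-A}\rceil$ and that $M\,r^{B}\ge r^{-\alpha A}$ for a fixed positive $\alpha$. The decisive technical gain over the proof of Theorem~\ref{EffectiveDensity} — the independence of both $N$ and $B$ from $A$ — comes precisely from the fact that in the compact case we are spared the length-$\log r^{-2A}$ detour that Theorem~\ref{EffectiveDiameter} would otherwise require in order to reach mid-trajectory points of large height. No Borel--Cantelli refinement is needed, as Theorem~\ref{CompactEffectiveDensity} only asserts the probabilistic bound.
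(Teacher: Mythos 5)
Your proof is correct and follows essentially the same route as the paper's (which simply says to drop the lower-bound hypothesis in Lemma~\ref{PkiInductionEstimate} and rerun the argument of Theorem~\ref{EffectiveDensity}). Your version is slightly cleaner in its bookkeeping: by replacing $N=\lceil C\log r^{-2A}\rceil$ with $N=\lceil C\log r^{-1}\rceil$ (valid since $\operatorname{ht}\equiv 1$ on a compact $X$), you make both $N$ and the exponent $B$ independent of $A$, which lets you set $M\asymp r^{-A}/\log r^{-1}$ directly and avoid the final $A\mapsto A/(2D)$ reparametrization used in the paper's proof of Theorem~\ref{EffectiveDensity}.
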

	
	\begin{proof}
		The proof is as the one of Theorem~\ref{EffectiveDensity} without requiring to deal with quantitative non-divergence. Indeed, we may drop the lower bound on $\mu^{\otimes kN}(P_{k,i})$ in Lemma~\ref{PkiInductionEstimate} and proceed otherwise as in the proof of Theorem~\ref{EffectiveDensity}.
	\end{proof}
	
	\bibliography{references.bib}
\end{document}